\newtheorem{theorem}{Theorem}
\newtheorem{corollary}{Corollary}[theorem]
\newtheorem{proposition}{Proposition}
\newtheorem{remark}{Remark}
\newtheorem{example}{Example}
\title{Notes on theta series for Niemeier lattices}
\author{Shoyu Nagaoka and Sho Takemori}
\begin{document}
\maketitle
\begin{abstract}
Some explicit expressions are given for the theta series of Niemeier lattices. As an application, 
we present some of their congruence relations.
%
%\keywords{Theta series \and Siegel modular forms \and Congruences for modular forms}
%
\end{abstract}
\section{Introduction}
\label{intro}
The main object of this note is the theta series $\vartheta_{\mathcal{L}}^{(n)}$ 
associated with the Niemeier lattice $\mathcal{L}$:
\[
\vartheta_{\mathcal{L}}^{(n)}=\vartheta_S^{(n)}(Z)
:=\sum_{X\in M_{24,n}(\mathbb{Z})}{\rm exp}(\pi\sqrt{-1}{\rm tr}(S[X]Z)),\quad 
Z\in \mathbb{H}_n,
\]
where $S\in 2{\rm Sym}_{24}^*(\mathbb{Z})$ is the Gram matrix of $\mathcal{L}$, 
$S[X]:={}^t\!XSX$, and $\mathbb{H}_n$ is the Siegel upper half-space of degree $n$.
In the following, we will sometimes refer to $\vartheta_{\mathcal{L}}^{(n)}$ as the Niemeier theta
series.

A Niemeier lattice $\mathcal{L}$ is one of the 24 positive definite, even, unimodular
lattices of rank 24. Therefore, the Niemeier theta series $\vartheta_{\mathcal{L}}^{(n)}$
becomes a Siegel modular form of weight 12 and degree $n$. In this note, we give explicit
expressions for $\vartheta_{\mathcal{L}}^{(n)}$, using some modular forms of weight 12
with integral Fourier coefficients for the cases $n=2$ and 3.
%%%%%%%%%%%%%%%%%%%%%%%%%%%%%%%%%%%%%%%%
\begin{theorem} 
\label{introtheorem}
Let $\mathcal{L}$ be a Niemeier lattice with Coxeter number
$h=h_{\mathcal{L}}$.
The theta series $\vartheta_{\mathcal{L}}^{(3)}$ has the following expression:
\begin{equation}
\label{main}
\begin{split}
\vartheta_{\mathcal{L}}^{(3)}
=& (E_4^{(3)})^3+(24h-720)Y_{12}^{(3)}+(48h^2-2880h+43200)X_{12}^{(3)}\\
 & +(48h^3-288h^2+3144h-1131120)F_{12},
\end{split}
\end{equation}
where $E_4^{(3)}$ is the Eisenstein series of weight 4 and degree 3; $Y_{12}^{(3)}$,
$X_{12}^{(3)}$ are Siegel modular forms of weight 12 with integral Fourier coefficients
defined in Section \ref{sec:4-1}; and $F_{12}$ is Miyawaki's cusp form of weight 12 (cf. \cite{Miyawaki}).
\end{theorem}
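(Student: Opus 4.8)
The plan is to locate $\vartheta_{\mathcal{L}}^{(3)}$ inside the finite-dimensional space $M_{12}(\Gamma_3)$ of Siegel modular forms of weight $12$ and degree $3$ and to read off its coordinates from finitely many Fourier coefficients. By Tsuyumine's determination of the graded ring of Siegel modular forms of degree $3$ one has $\dim_{\mathbb{C}}M_{12}(\Gamma_3)=4$, and the four forms $(E_4^{(3)})^3,\ Y_{12}^{(3)},\ X_{12}^{(3)},\ F_{12}$ are linearly independent, hence a basis: $F_{12}$ is a cusp form, while $(E_4^{(3)})^3, Y_{12}^{(3)}, X_{12}^{(3)}$ span a complement of the space of cusp forms, as one checks from their Fourier expansions together with their behaviour under the Siegel operator $\Phi$. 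Since $\mathcal{L}$ is even, positive definite and unimodular of rank $24$, $\vartheta_{\mathcal{L}}^{(3)}\in M_{12}(\Gamma_3)$, so
\[
\vartheta_{\mathcal{L}}^{(3)}=a\,(E_4^{(3)})^3+b\,Y_{12}^{(3)}+c\,X_{12}^{(3)}+d\,F_{12}
\]
for scalars $a,b,c,d$ that a priori depend on $\mathcal{L}$, and the task is to show they equal the four polynomials in $h$ displayed in \eqref{main}.

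First I would determine $a,b,c$ by applying $\Phi$. One has $\Phi\vartheta_{\mathcal{L}}^{(3)}=\vartheta_{\mathcal{L}}^{(2)}$, $\Phi^2\vartheta_{\mathcal{L}}^{(3)}=\vartheta_{\mathcal{L}}^{(1)}$ and $\Phi^3\vartheta_{\mathcal{L}}^{(3)}=1$, while $\Phi F_{12}=0$ and $Y_{12}^{(3)},X_{12}^{(3)}$ are constructed from Klingen--Eisenstein series of lower degree; since the kernel of $\Phi$ on $M_{12}(\Gamma_3)$ is exactly the space of cusp forms, $\Phi$ carries $\langle (E_4^{(3)})^3,Y_{12}^{(3)},X_{12}^{(3)}\rangle$ isomorphically onto $M_{12}(\Gamma_2)$, with images the corresponding degree-$2$ forms. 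Comparing the projected identity with the degree-$2$ expression for $\vartheta_{\mathcal{L}}^{(2)}$ proved earlier in the paper, and using that the Fourier coefficient of $\vartheta_{\mathcal{L}}^{(1)}$ at $(1)$ is the number $24h$ of roots of $\mathcal{L}$, gives $a=1$, $b=24h-720$, $c=48h^2-2880h+43200$.

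It remains to find $d$, and for this I would use a Fourier coefficient indexed by a positive-definite $T\in\operatorname{Sym}_3$ at which $F_{12}$ does not vanish, e.g. $T=I_3$. The $T$-th Fourier coefficient of $\vartheta_{\mathcal{L}}^{(3)}$ is $\#\{X\in M_{24,3}(\mathbb{Z}):{}^t\!XSX=2T\}$, i.e. (for $T=I_3$) the number of ordered triples of pairwise orthogonal roots of $\mathcal{L}$; on the right-hand side of \eqref{main} the $T$-th coefficient is, with $a,b,c$ now known, an explicit polynomial in $h$ plus $d$ times the nonzero $T$-th coefficient of $F_{12}$, so it suffices to show that the left-hand coefficient depends on $\mathcal{L}$ only through $h$. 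This is the main obstacle. It follows from the structure of Niemeier root systems: each such system is an orthogonal direct sum of irreducible $ADE$ root systems all sharing the Coxeter number $h$, and a uniform count shows that the number of roots, of ordered pairs of orthogonal roots, and of ordered triples of pairwise orthogonal roots are fixed polynomials in $h$ (the Leech lattice, $h=0$, included); alternatively one simply evaluates this coefficient for each of the $24$ Niemeier lattices and observes it is a function of $h$. Equating the two polynomials in $h$ yields $d=48h^3-288h^2+3144h-1131120$, and since we have matched both sides of \eqref{main} under a spanning set of linear functionals on the $4$-dimensional space $M_{12}(\Gamma_3)$ (three obtained from the degree $\le 2$ data via $\Phi$, and one rank-$3$ Fourier coefficient), the identity follows.
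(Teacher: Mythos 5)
Your determination of the coefficients $a=1$, $b=24h-720$, $c=48h^2-2880h+43200$ via the Siegel operator, the degree-two identity, and $S_{12}(\Gamma_3)=\mathbb{C}\cdot F_{12}$ is exactly the paper's first step. The gap is in your determination of $d$. You evaluate at the single rank-three coefficient $T=I_3$, so the entire weight of the proof falls on the claim that $a(\vartheta_{\mathcal{L}}^{(3)};I_3)$, the number of ordered triples of pairwise orthogonal roots of $\mathcal{L}$, depends on $\mathcal{L}$ only through $h$. You assert this follows from a ``uniform count'' over the irreducible components of the Niemeier root system, but the uniformity that works for roots (there are $24h$) and for ordered orthogonal pairs (each root is orthogonal to exactly $20h+6$ others, since within its own component of rank $n_i$ it is orthogonal to $n_ih-4h+6$ roots) genuinely breaks down for triples: the number of roots orthogonal to a fixed orthogonal pair inside one irreducible component is not constant on pairs. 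For instance in $D_6$ the pair $(e_1-e_2,\,e_1+e_2)$ has $24$ roots orthogonal to both, while $(e_1-e_2,\,e_3-e_4)$ has only $8$. So the triple count inside a component is not produced by the same per-root/per-pair argument; one would have to compute, type by type in the $ADE$ classification, the number of orthogonal triples in each irreducible system and check that the grand total for every Niemeier configuration with a given $h$ collapses to a single polynomial in $h$ --- or actually carry out your fallback of evaluating this coefficient for all $24$ lattices. Neither is done, and since this claim is precisely the part of the theorem not already forced by the degree $\le 2$ data, the argument as written is circular at its crucial point.

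The paper avoids this issue by choosing a different linear functional to pin down $d$: the coefficient of $q_{11}q_{22}q_{33}$ in the restriction to the diagonal $Z=\mathrm{diag}(z_{11},z_{22},z_{33})$. On the theta series this restriction factors as $\vartheta_{\mathcal{L}}^{(1)}(z_{11})\vartheta_{\mathcal{L}}^{(1)}(z_{22})\vartheta_{\mathcal{L}}^{(1)}(z_{33})$, so its value is $(24h)^3$, manifestly a function of $h$ alone, while on $(E_4^{(3)})^3$, $Y_{12}^{(3)}$, $X_{12}^{(3)}$, $F_{12}$ it takes the explicit values $373248000$, $169632$, $1728$, $288$; solving the resulting linear equation gives $c_2=48h^3-288h^2+3144h-1131120$ with no root-system combinatorics at all. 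If you want to keep your framework, replace your $T=I_3$ functional by this one (equivalently, the sum of $a(\vartheta_{\mathcal{L}}^{(3)};T)$ over all $T$ with unit diagonal): that is exactly the device that makes the dependence on $h$ automatic, and the rest of your argument then goes through.
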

%%%%%%%%%%%%%%%%%%%%%%%%%%%%%%%%%%%%%%%%%
The expression (\ref{main}) leads to congruence relations among the Niemeier theta
series. For example, the congruence relations between the Coxeter numbers are related to those of the 
Niemeier theta series.
\begin{corollary}
\label{introcor1}
Let $\mathcal{L}_i$ $(i=1,2)$ be Niemeier lattices with Coxeter number $h_i:=h_{\mathcal{L}_i}$.
If $h_1 \equiv h_2 \pmod{m}$ for an integer $m$, then
\[
\vartheta_{\mathcal{L}_1}^{(3)} \equiv \vartheta_{\mathcal{L}_2}^{(3)} \pmod{m}.
\]
\end{corollary}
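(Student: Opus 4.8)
The plan is to read the congruence straight off the explicit formula (\ref{main}) of Theorem~\ref{introtheorem}. I would apply (\ref{main}) once with $h=h_1$ and once with $h=h_2$ and subtract; the $h$-independent term $(E_4^{(3)})^3$ cancels, leaving
\[
\vartheta_{\mathcal{L}_1}^{(3)}-\vartheta_{\mathcal{L}_2}^{(3)}
= A\,Y_{12}^{(3)}+B\,X_{12}^{(3)}+C\,F_{12},
\]
where $A$, $B$, $C$ denote the differences of the three coefficient polynomials on the right-hand side of (\ref{main}).

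The crux is then purely algebraic: each of $A$, $B$, $C$ vanishes identically when $h_1=h_2$, hence is divisible by $h_1-h_2$ in $\mathbb{Z}[h_1,h_2]$. Concretely, $A=24(h_1-h_2)$, $B=(h_1-h_2)\bigl(48(h_1+h_2)-2880\bigr)$, and $C=(h_1-h_2)\bigl(48(h_1^2+h_1h_2+h_2^2)-288(h_1+h_2)+3144\bigr)$. Since $h_1,h_2$ are integers and $m\mid h_1-h_2$ by hypothesis, $m$ divides each of the integers $A$, $B$, $C$.

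Finally I would pass from divisibility of these scalars to divisibility of Fourier coefficients: since $Y_{12}^{(3)}$, $X_{12}^{(3)}$ have integral Fourier coefficients by their construction in Section~\ref{sec:4-1}, and $F_{12}$ is likewise normalized there to have integral Fourier coefficients, every Fourier coefficient of $A\,Y_{12}^{(3)}+B\,X_{12}^{(3)}+C\,F_{12}$ lies in $m\mathbb{Z}$, which is exactly the assertion $\vartheta_{\mathcal{L}_1}^{(3)}\equiv\vartheta_{\mathcal{L}_2}^{(3)}\pmod m$. The only step I expect to need genuine care is the integrality of the Fourier coefficients of $F_{12}$; however, even the weaker fact that $24F_{12}$ has integral Fourier coefficients --- which itself follows from (\ref{main}) applied to a few Niemeier lattices (e.g. the Leech lattice, where $h=0$) together with the integrality of theta series --- already suffices, because the coefficients of $C/(h_1-h_2)$ are all divisible by $24$. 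The remaining work is routine bookkeeping.
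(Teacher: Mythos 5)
Your proof is correct and follows exactly the route the paper intends: the corollary is stated there as a ``straightforward conclusion'' of Theorem~\ref{mainresult}, i.e.\ the coefficients of $Y_{12}^{(3)}$, $X_{12}^{(3)}$, $F_{12}$ are integer polynomials in $h$, so $h_1\equiv h_2\pmod m$ forces the coefficient differences to be divisible by $m$, and the integrality of the Fourier coefficients of these forms is already supplied by Proposition~\ref{prop}. Your fallback remark about only needing $24F_{12}$ integral is unnecessary (and as sketched would itself need the integrality of the other generators), but it does not affect the argument.
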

%%%%%%%%%%%%
Conway and Sloane listed such
lattices according to the glue code and named them $\alpha$, $\beta$,\ldots, $\omega$
(cf. Conway and Sloane \cite{C-S}, p. 407, Table 16.1). 
%%%%%%%%%%%%%
\begin{theorem}
\label{introtheorem2}
Let $\alpha$, $\omega$, $\delta$, and $\psi$ be some of the Niemeier lattices defined above.
The following congruence relations hold:
\begin{align*}
& \vartheta_{\alpha}^{(3)} \equiv \vartheta_{\omega}^{(3)} \equiv \vartheta_{[4,2,6]}^{(3)} \pmod{23},\\
& \vartheta_{\delta}^{(3)} \equiv \vartheta_{\psi}^{(3)} \equiv \vartheta_{[2,2,12]}^{(3)} \pmod{23},
\end{align*}
where we use the following abbreviations: 
$[4,2,6]=\begin{pmatrix}4&1 \\ 1 & 6 \end{pmatrix}$, and 
$[2,2,12]=\begin{pmatrix}2&1 \\ 1 & 12\end{pmatrix}$ (see (\ref{abb}) in Section \ref{sec:4-2}).
\end{theorem}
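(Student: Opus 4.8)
The plan splits the statement into two kinds of congruences. For the congruences between the Niemeier theta series themselves, I would first read the root systems and Coxeter numbers of $\alpha,\omega,\delta,\psi$ off Conway--Sloane's Table 16.1: $\alpha$ has root system $D_{24}$ with $h_\alpha=46$, $\omega$ is the Leech lattice with $h_\omega=0$, $\delta$ has root system $A_{24}$ with $h_\delta=25$, and $\psi$ has root system $A_1^{24}$ with $h_\psi=2$. Since $46\equiv 0$ and $25\equiv 2\pmod{23}$, Corollary~\ref{introcor1} immediately gives $\vartheta_\alpha^{(3)}\equiv\vartheta_\omega^{(3)}$ and $\vartheta_\delta^{(3)}\equiv\vartheta_\psi^{(3)}\pmod{23}$, so it remains to prove $\vartheta_\omega^{(3)}\equiv\vartheta_{[4,2,6]}^{(3)}$ and $\vartheta_\psi^{(3)}\equiv\vartheta_{[2,2,12]}^{(3)}\pmod{23}$.

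For the latter I would feed the Coxeter numbers into (\ref{main}): for $h\equiv 0\pmod{23}$ its coefficients reduce mod $23$ to those of its specialization at $h=0$, and for $h\equiv 2\pmod{23}$ to those at $h=2$, giving two fixed $\mathbb{F}_{23}$-combinations of $(E_4^{(3)})^3,Y_{12}^{(3)},X_{12}^{(3)},F_{12}$. One must then check that these equal, mod $23$, the Siegel theta series of the two binary even lattices of determinant $23$. I would compare Fourier coefficients according to the rank of the index $T$. If $\operatorname{rank}T=3$ both sides vanish mod $23$: the right-hand sides are theta series of rank-$2$ lattices and have no rank-$3$ coefficient, while the rank-$3$ representation number $r_{\mathcal L}(T)$ of a Niemeier lattice $\mathcal L\in\{\omega,\psi\}$ (or $\{\alpha,\delta\}$) is a sum of $\mathrm{Aut}(\mathcal L)$-orbit sizes of triples of lattice vectors with Gram matrix $T$, and since $23\mid|\mathrm{Aut}(\mathcal L)|$ while an order-$23$ automorphism has only a $2$-dimensional fixed space on $\mathcal L\otimes\mathbb Q$ (its rational representation being trivial plus a $22$-dimensional $\mathbb Q(\zeta_{23})$-representation), no such element fixes a rank-$3$ configuration, so every orbit has size divisible by $23$ and $r_{\mathcal L}(T)\equiv 0\pmod{23}$. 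For $\operatorname{rank}T\le 2$ I would use that the Siegel $\Phi$-operator commutes with reduction mod $23$ and that a rank-$r$ Fourier coefficient of a degree-$3$ form is, up to $\mathrm{GL}_3(\mathbb Z)$, a coefficient of its $(3-r)$-fold $\Phi$-image; this reduces everything to the degree-$1$ and degree-$2$ congruences $\vartheta_\omega^{(j)}\equiv\vartheta_{[4,2,6]}^{(j)}$ and $\vartheta_\psi^{(j)}\equiv\vartheta_{[2,2,12]}^{(j)}\pmod{23}$, $j=1,2$.

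In degree $1$ this amounts to $\vartheta_{\Lambda_{24}}^{(1)}=E_{12}^{(1)}-\tfrac{65520}{691}\Delta$ and $\vartheta_{[4,2,6]}^{(1)}=\theta_{[2,1,3]}=E_1^{\chi_{-23}}-\tfrac23\,\eta(z)\eta(23z)$ (similarly $\vartheta_{[2,2,12]}^{(1)}=\theta_{[1,1,6]}=E_1^{\chi_{-23}}+\tfrac43\,\eta(z)\eta(23z)$), together with the classical congruences $\Delta\equiv\eta(z)\eta(23z)\pmod{23}$ and $E_{12}^{(1)}\equiv E_1^{\chi_{-23}}\pmod{23}$ (the latter being $\sigma_{11}(n)\equiv\sum_{d\mid n}\bigl(\tfrac d{23}\bigr)\pmod{23}$), using $-\tfrac{65520}{691}\equiv-\tfrac23\pmod{23}$ and, for $\psi$, that the coefficient of $\Delta$ in $\vartheta_{\mathcal L}^{(1)}$ depends only on $h_{\mathcal L}\bmod 23$ (via $\#\{\text{roots}\}=24h_{\mathcal L}$) and matches that of $\theta_{[1,1,6]}$ when $h_{\mathcal L}\equiv 2$. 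The degree-$2$ congruences I would treat in the same spirit, reducing the degree-$2$ analogue of (\ref{main}) from Section~\ref{sec:4-1} mod $23$ and matching it against $\vartheta_{[4,2,6]}^{(2)},\vartheta_{[2,2,12]}^{(2)}$ by a finite Fourier-coefficient check.

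The main obstacle, as I see it, is that the statement asserts a congruence mod $23$ between a genuine weight-$12$ Siegel modular form and a singular weight-$1$ one, so the whole argument rests on the two inputs that bridge this gap: the divisibility $23\mid|\mathrm{Aut}(\mathcal L)|$ with the low-dimensionality of fixed spaces of order-$23$ automorphisms, which kills all rank-$3$ Fourier coefficients of $\vartheta_{\mathcal L}^{(3)}$ modulo $23$, and the Ramanujan-type congruence $\Delta\equiv\eta(z)\eta(23z)\pmod{23}$ (equivalently the dihedral nature of $\bar\rho_{\Delta,23}$ over $\mathbb Q(\sqrt{-23})$), which repairs the weight discrepancy once one descends to degree $1$. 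If one wishes to sidestep this structural discussion, the alternative is computational: by (\ref{main}) and direct evaluation of representation numbers one writes both sides as explicit Fourier expansions mod $23$ and verifies agreement up to a Sturm-type bound for degree-$3$ Siegel modular forms, the difficulty then being only the extent of the computation and the justification of that bound.
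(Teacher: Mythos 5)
Your reduction has a genuine gap at the degree-two stage, and it sits exactly where the real difficulty of the theorem lies. Your orbit argument (an order-$23$ automorphism of $\alpha,\omega,\delta,\psi$ has $2$-dimensional fixed space, so every rank-$3$ representation lies in an orbit of size divisible by $23$) correctly kills the rank-$3$ Fourier coefficients, and your degree-$1$ argument via $\Delta\equiv\eta(z)\eta(23z)$ and $E_{12}\equiv E_1^{\chi_{-23}}\pmod{23}$ is sound. But after applying $\varPhi$ you still must prove $\vartheta_\omega^{(2)}\equiv\vartheta_{[4,2,6]}^{(2)}$ and $\vartheta_\psi^{(2)}\equiv\vartheta_{[2,2,12]}^{(2)}\pmod{23}$ for \emph{all} rank-$2$ indices $T$ (these coefficients do \emph{not} vanish mod $23$ -- the forms are mod $23$ singular of rank $2$, so this is the substantive part), and you propose to do this ``by a finite Fourier-coefficient check.'' That is not justified as stated: the left-hand side is a weight-$12$, level-$1$ Siegel form, the right-hand side a weight-$1$ theta series of level $23$ with character, so their difference is not a modular form of any single weight and level, and no Sturm-type bound (such as Theorem \ref{theorem4}) applies to it. This is precisely the point where the paper invokes Theorem \ref{ThB-N} (B\"ocherer--Nagaoka): modulo $23$ one may replace $\vartheta_{[4,2,6]}^{(n)}$ and $\vartheta_{[2,2,12]}^{(n)}$ by forms $G_i\in M_{12}(\Gamma_n)_{\mathbb{Z}_{(23)}}$, after which the difference with the Niemeier theta series lives in one space $M_{12}(\Gamma_3)_{\mathbb{Z}_{(23)}}$ and the Richter--Raum bound (only $t_{ii}\le 1$ is needed, since $(4/3)^3\cdot 12/16<2$) legitimizes the finite check. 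Your closing remark about ``justification of that bound'' concedes the issue but does not resolve it.

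Note also that your own structural idea can be completed without B\"ocherer--Nagaoka, but differently from what you wrote: work with the cyclic group generated by a single order-$23$ automorphism $g$ rather than all of $\mathrm{Aut}(\mathcal{L})$. Then for every $T$ (of any rank) the $g$-orbits on representations have size $1$ or $23$, and the fixed representations are exactly the representations of $T$ by the fixed lattice $\mathcal{L}^g$; hence $\vartheta_{\mathcal{L}}^{(n)}\equiv\vartheta_{\mathcal{L}^g}^{(n)}\pmod{23}$ for every $n$, and it remains to identify $\mathcal{L}^g$ with $[4,2,6]$ (for $\alpha,\omega$) and $[2,2,12]$ (for $\delta,\psi$). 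This is the approach of Schulze-Pillot cited in the paper, and it is genuinely different from the paper's proof (which never touches automorphisms, arguing instead via Theorem \ref{ThB-N}, explicit coefficient tables, and the Sturm bound). As written, however, your proposal supplies neither the fixed-lattice identification nor the B\"ocherer--Nagaoka bridge, so the rank-$2$ (equivalently degree-$2$) congruences remain unproved. A small additional slip: the rational representation of an order-$23$ element on $\mathcal{L}\otimes\mathbb{Q}$ is the trivial representation with multiplicity two plus the $22$-dimensional cyclotomic one ($24=2+22$), which is what gives the $2$-dimensional fixed space you use.
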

These congruence relations lead to the following fact (also cf. \cite{Schulze}).
\begin{corollary}
\label{introcor1}
{\rm (1)} We have
\[
\varTheta(\vartheta_{\alpha}^{(2)}) \equiv \varTheta(\vartheta_{\omega}^{(2)}) \equiv
\varTheta(\vartheta_{\delta}^{(2)}) \equiv \varTheta(\vartheta_{\psi}^{(2)}) \equiv 0 \pmod{23},
\]
where $\varTheta$ is the theta operator (the generalized Ramanujan
operator) defined in Section \ref{sec:2-4} .\\
{\rm (2)} The Siegel modular forms
\[
\vartheta_{\alpha}^{(3)},\quad
\vartheta_{\omega}^{(3)},\quad
\vartheta_{\delta}^{(3)},\quad
\vartheta_{\psi}^{(3)}
\]
are the mod 23 singular modular forms with the maximal 23-rank 2 in the sense of Section \ref{sec:2-4}
(cf. \cite{B-K}).
\end{corollary}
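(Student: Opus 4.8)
The plan is to deduce both parts from Theorem~\ref{introtheorem2} by transporting its congruences first through the Siegel $\Phi$-operator and then through the theta operator $\varTheta$. The arithmetic input that makes this work is the coincidence
\[
\det[4,2,6]=\det[2,2,12]=23 ;
\]
writing $S_{0}$ for either of these two binary Gram matrices, the forms on the right-hand side of Theorem~\ref{introtheorem2} are the degree-$3$ theta series $\vartheta_{S_{0}}^{(3)}$ of a positive definite binary form of determinant $23$.

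For part (2), I would first note that for $X\in M_{2,3}(\mathbb{Z})$ the matrix ${}^{t}XS_{0}X$ has rank at most $2$, so every Fourier coefficient of $\vartheta_{S_{0}}^{(3)}$ is supported on semi-integral $T\ge 0$ with ${\rm rank}\,T\le 2$. Combined with Theorem~\ref{introtheorem2}, this shows that each of $\vartheta_{\alpha}^{(3)},\vartheta_{\omega}^{(3)},\vartheta_{\delta}^{(3)},\vartheta_{\psi}^{(3)}$ has all of its rank-$3$ Fourier coefficients divisible by $23$, hence is a mod $23$ singular modular form of $23$-rank at most $2$. The compatibility between weight and $23$-rank recorded in Section~\ref{sec:2-4} (cf.\ \cite{B-K}) then forces the $23$-rank of a mod $23$ singular form of weight $12$ and degree $3$ to be exactly $2$, which is at the same time the largest value a $23$-rank can take here; if one prefers to argue directly, it suffices to exhibit a single rank-$2$ index $T$ with $\vartheta_{S_{0}}^{(3)}$-coefficient prime to $23$. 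This proves (2).

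For part (1), I would use the identity $a_{\mathcal{L}}^{(3)}\bigl(\begin{smallmatrix}T&0\\0&0\end{smallmatrix}\bigr)=a_{\mathcal{L}}^{(2)}(T)$ for every semi-integral $T\ge 0$ of degree $2$, which holds because a vector of norm $0$ in a positive definite lattice is zero (the same identity holds with $S_{0}$ in place of $\mathcal{L}$). Applying the Siegel operator to Theorem~\ref{introtheorem2} — equivalently, reading off the Fourier coefficients at the indices $\bigl(\begin{smallmatrix}T&0\\0&0\end{smallmatrix}\bigr)$ — therefore yields
\[
\vartheta_{\alpha}^{(2)}\equiv\vartheta_{\omega}^{(2)}\equiv\vartheta_{[4,2,6]}^{(2)},\qquad
\vartheta_{\delta}^{(2)}\equiv\vartheta_{\psi}^{(2)}\equiv\vartheta_{[2,2,12]}^{(2)}\pmod{23}.
\]
It then suffices to prove $\varTheta\bigl(\vartheta_{S_{0}}^{(2)}\bigr)\equiv 0\pmod{23}$, since $\varTheta$ acts on $q$-expansions and so respects these congruences. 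If $r(T)$ is a nonzero Fourier coefficient of $\vartheta_{S_{0}}^{(2)}$, pick $X\in M_{2,2}(\mathbb{Z})$ with $2T={}^{t}XS_{0}X$; then $4\det T=23\,(\det X)^{2}$, so $23\mid 4\det T$ and hence $\det(T)\,r(T)\equiv 0\pmod{23}$, because $4$ is a unit modulo $23$. Since $\varTheta$ multiplies the $T$-th Fourier coefficient by $\det T$, this gives $\varTheta(\vartheta_{S_{0}}^{(2)})\equiv 0\pmod{23}$, and combining with the displayed congruences yields $\varTheta(\vartheta_{\alpha}^{(2)})\equiv\varTheta(\vartheta_{\omega}^{(2)})\equiv\varTheta(\vartheta_{\delta}^{(2)})\equiv\varTheta(\vartheta_{\psi}^{(2)})\equiv 0\pmod{23}$, as desired.

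The genuine content throughout is Theorem~\ref{introtheorem2}; the remaining steps are short. I expect the only mildly delicate points to be the $23$-integrality bookkeeping in part (1) — the index $T$ is only semi-integral, so $\det T\in\frac14\mathbb{Z}$, but this is harmless because $2$ is invertible modulo $23$ — and, in part (2), the justification that the $23$-rank is exactly $2$ and not smaller, which is the one step where one must either quote the mod $p$ singular form theory from \cite{B-K} or produce an explicit rank-$2$ Fourier coefficient coprime to $23$.
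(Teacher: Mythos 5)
Your part (1) is correct and is essentially the argument the paper leaves implicit (its proof of the corresponding theorem in Section \ref{sec:5-2} says only that (1) ``is a consequence of Theorem \ref{theoremcong}''): you transfer the degree-3 congruences of Theorem \ref{introtheorem2}/\ref{theoremcong} to degree 2 by reading off coefficients at $\left(\begin{smallmatrix}T&0\\0&0\end{smallmatrix}\right)$, and you observe that every index $T$ actually represented by the binary form $S_0$ of determinant $23$ satisfies $23\mid\det(2T)$, so $\varTheta(\vartheta_{S_0}^{(2)})\equiv 0\pmod{23}$; the bookkeeping with $\det T\in\tfrac14\mathbb{Z}$ is handled correctly since $2$ is a unit mod $23$.

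In part (2), however, the maximality of the $23$-rank is not actually proved. The bound ``rank $\le 2$'' is fine and matches the paper's mechanism (the binary theta series has no rank-$3$ coefficients, so the congruences kill all rank-$3$ coefficients mod $23$). But your primary justification that the rank equals $2$ appeals to a ``compatibility between weight and $23$-rank recorded in Section \ref{sec:2-4}'' --- no such statement appears there; Section \ref{sec:2-4} only defines mod $p$ singular forms and notes $\varTheta(F)\equiv 0\pmod p$. You are really invoking an external theorem of B\"ocherer--Kikuta whose hypotheses you do not check, and even granting it you would still have to exclude $23$-rank $0$ (a parity/weight relation only rules out rank $1$). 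Your fallback --- ``exhibit a single rank-$2$ index with coefficient prime to $23$'' --- is the right move and is exactly what the paper does, but you never produce such a coefficient, whereas the paper records the explicit values $a(\vartheta_{\alpha}^{(2)};[2,1,3])\equiv a(\vartheta_{\delta}^{(2)};[1,1,6])\equiv a(\vartheta_{\psi}^{(2)};[1,1,6])\equiv a(\vartheta_{\omega}^{(2)};[2,1,3])\equiv 2\pmod{23}$, which give nonvanishing rank-$2$ coefficients of the degree-$3$ forms at the corresponding $T\oplus 0$. Within your own framework the cheapest repair is to note that the coefficient of $\vartheta_{S_0}^{(2)}$ at the index $T$ with $2T=S_0$ is the number of automorphs of the binary form (here $2$), a unit modulo $23$, and then transfer this through the congruences; without some such explicit input, part (2) as written has a genuine gap.
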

%%%%%%%%%%%%%%%%%%%%%%%%%%%%%%%%%%%%%%%%%%%%%%%%%%%%%%%%%%%%%%%%%%%%%%%%%%%%%%%
\section{Preliminaries}
\label{sec:2}
%%%%%%%%%%
\subsection{Notation}
\label{sec:2-1}
We begin by stating the notation that we will use. Let
$\Gamma_n:=Sp_n(\mathbb{Z})$ be the Siegel modular group of degree $n$, and let 
$\mathbb{H}_n$ be the Siegel upper half-space of degree $n$.
We denote by $M_k(\Gamma_n)$ the $\mathbb{C}$-vector space of all Siegel 
modular forms of weight $k$ for $\Gamma_n$, and denote by $S_k(\Gamma_n)$ the 
subspace of cusp forms.

Any $F(Z)$ in $M_k(\Gamma_n)$ has a Fourier expansion of the form
\[
F(Z)=
\sum_{0\leq T\in {\rm Sym}_n^*(\mathbb{Z})} a(F;T)q^T,\;
q^T:={\rm exp}(2\pi\sqrt{-1}{\rm tr}(TZ)),\;Z\in\mathbb{H}_n,
\]
where
\[
{\rm Sym}_n^*(\mathbb{Z}):=
\{\;T=(t_{ij})\in{\rm Sym}_n(\mathbb{Q})\;|\; t_{ii}, 2t_{ij}\in\mathbb{Z}\;\}.
\]
We will write the Fourier coefficient corresponding to 
$T\in {\rm Sym}_n^*(\mathbb{Z})$ as $a(F;T)$.

For a  subring $R$ of $\mathbb{C}$, let $M_k(\Gamma_n)_R\subset M_k(\Gamma_n)$ denote
the $R$-module of all modular forms whose Fourier coefficients lie in $R$.
%%%%%%%%%%%%
\subsection{Formal $q$-expansion}
\label{sec:2-2}
For $T=(t_{ij})\in {\rm Sym}_n^*(\mathbb{Z})$ and $Z=(z_{ij})\in\mathbb{H}_n$,
we define $q_{ij}:={\rm exp}(2\pi\sqrt{-1}z_{ij})$. Then,
\[
q^T:={\rm exp}(2\pi\sqrt{-1}{\rm tr}(TZ))
=\prod_{i<j}q_{ij}^{2t_{ij}}\prod_{i=1}^n q_{ii}^{t_{ii}}.
\]
Therefore, we may consider $F\in M_k(\Gamma_n)_R$ as an element of the formal
power series ring
\[
F=\sum a(F;T)q^T\in R[q_{ij},q_{ij}^{-1}][\![q_{11},\ldots,q_{nn}]\!].
\]
For a prime number $p$, we denote by $\mathbb{Z}_{(p)}$ the local ring
of $p$-integral rational numbers. For two elements
\[
F_i=\sum a(F_i;T)q^T\in \mathbb{Z}_{(p)}[q_{ij},q_{ij}^{-1}][\![q_{11},\ldots,q_{nn}]\!]\;\;
(i=1,2),
\]
we write $F_1 \equiv F_2 \pmod{p}$ if the congruence relation
\[
a(F_1;T) \equiv a(F_2;T) \pmod{p}
\]
is satisfied for all $0\leq T\in {\rm Sym}_n^*(\mathbb{Z})$.
%%%%%%%%%%%%%
\subsection{Theta series for lattices and matrices}
\label{sec:2-3}
For a positive definite integral lattice $\mathcal{L}$ of rank $m$, we write the Gram matrix
as $S=S_{\mathcal{L}}\in{\rm Sym}_m(\mathbb{Z})$. We associate with it the theta series
\[
\vartheta_{\mathcal{L}}^{(n)}=\vartheta_S^{(n)}(Z)
:=\sum_{X\in M_{m,n}(\mathbb{Z})}{\rm exp}(\pi\sqrt{-1}{\rm tr}(S[X]Z)),\quad 
Z\in \mathbb{H}_n.
\]
In general, this becomes a Siegel modular form for some congruence subgroup of $\Gamma_n$.
In particular,
\[
\vartheta_{\mathcal{L}}^{(n)}=\vartheta_S^{(n)}(Z) \in M_{\frac{m}{2}}(\Gamma_n)_{\mathbb{Z}}
\]
if $\mathcal{L}$ is a positive definite, even, unimodular lattice of rank $m$.

For our use below, we now quote the following result, which is a special case of a theorem presented by 
B\"{o}cherer and Nagaoka (\cite{B-N}, Theorem 5).
\begin{theorem}
\label{ThB-N}
{\rm (\cite{B-N}, Theorem 5)}
Assume that $p\geq 2n+3$ and $p \equiv 3 \pmod{4}$. Let $S\in {\rm Sym}_2(\mathbb{Z})$ be a positive
definite binary quadratic form with det$(2S)=p$. Then, there exists a modular form
$G\in M_{\frac{p+1}{2}}(\Gamma_n)_{\mathbb{Z}_{(p)}}$ such that
\[
\vartheta_S^{(n)} \equiv G \pmod{p}.
\]
\end{theorem}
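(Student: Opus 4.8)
The plan is to obtain this as the special case $m=2$, $\det(2S)=p$, of \cite{B-N}, Theorem 5; let me outline the mechanism behind it. Since $p\equiv 3\pmod 4$, the integer $-p$ is a fundamental discriminant and the positive definite even binary lattice attached to $S$ has discriminant $-p$. By the transformation theory of theta series with level, $\vartheta_S^{(n)}$ is a Siegel modular form of weight $m/2=1$ and degree $n$ for $\Gamma_0^{(n)}(p)$ whose nebentype is a quadratic character $\chi$ modulo $p$ (trivial when $n$ is even), and its Fourier coefficients are nonnegative integers (numbers of representations by $S$), hence lie in $\mathbb{Z}_{(p)}$. The task is therefore to replace a weight-$1$ form of level $p$ by a weight-$\tfrac{p+1}{2}$ form of \emph{full} level $\Gamma_n$ without changing its reduction modulo $p$.

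The first step is to pass to an even weight in which level-$1$ forms exist. Let $H\in M_{(p-1)/2}\bigl(\Gamma_0^{(n)}(p),\chi\bigr)_{\mathbb{Z}_{(p)}}$ be the Siegel Eisenstein series of weight $(p-1)/2$, degree $n$, level $p$ and nebentype $\chi$, normalized so that its constant Fourier coefficient is $1$. Both hypotheses enter here: $p\equiv 3\pmod 4$ makes the parity $(-1)^{(p-1)/2}=\bigl(\tfrac{-1}{p}\bigr)=-1$ match $\chi$, equivalently it makes $\tfrac{p+1}{2}$ even so that nonzero level-$1$ forms of that weight exist; and $p\geq 2n+3$ gives $(p-1)/2\geq n+1$, which secures absolute convergence, holomorphy and $p$-integrality of $H$. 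The point of the normalization is that $H\equiv 1\pmod p$: the relevant (generalized) Bernoulli number governing the non-constant coefficients of $H$ has $p$ exactly in its denominator --- a von Staudt--Clausen-type coincidence forced by the particular character $\chi$ and weight $(p-1)/2$ --- so every coefficient of $H-1$ is divisible by $p$ (see \cite{B-N}). Putting $F:=\vartheta_S^{(n)}\cdot H$, we obtain $F\in M_{(p+1)/2}\bigl(\Gamma_0^{(n)}(p)\bigr)_{\mathbb{Z}_{(p)}}$ --- its nebentype $\chi^2$ is trivial since $\chi$ is quadratic --- and $F\equiv\vartheta_S^{(n)}\cdot 1=\vartheta_S^{(n)}\pmod p$.

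What remains --- and what I expect to be the main obstacle --- is to descend $F$ from level $p$ to full level modulo $p$: to produce $G\in M_{(p+1)/2}(\Gamma_n)_{\mathbb{Z}_{(p)}}$ with $G\equiv F\pmod p$. A general modular form for $\Gamma_0^{(n)}(p)$ is \emph{not} congruent modulo $p$ to one for $\Gamma_n$, so one must exploit the special nature of $\vartheta_S^{(n)}$: its behaviour under the Fricke involution $W_p^{(n)}$, which carries it to the theta series of the dual lattice $pS^{-1}$ (again a lattice of discriminant $-p$), together with its ordinariness at $p$ (the $U_p$-eigenvalue being a $p$-adic unit). By the geometry of $\Gamma_0(p)$-level structures in characteristic $p$ --- on the component of the moduli space where Frobenius acts through the canonical subgroup the level structure becomes redundant --- such a form lies in the image of reduction modulo $p$ from $M_{(p+1)/2}(\Gamma_n)_{\mathbb{Z}_{(p)}}$; making this precise in the range $p\geq 2n+3$ is exactly the content of \cite{B-N}, Theorem 5. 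Granting it, $\vartheta_S^{(n)}\equiv F\equiv G\pmod p$ with $G\in M_{(p+1)/2}(\Gamma_n)_{\mathbb{Z}_{(p)}}$, which proves the theorem.
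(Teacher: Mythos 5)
Your proposal is correct and takes essentially the same route as the paper: the paper's proof is nothing more than the citation of \cite{B-N}, Theorem 5, applied with $f=\vartheta_S^{(n)}\in M_1(\Gamma_n,\chi_p)^0$ and $g=G\in M_{1+\frac{p-1}{2}}(\Gamma_n)$, which is exactly the reduction you make. Your additional sketch of the internal mechanism (multiplication by the weight-$\tfrac{p-1}{2}$ Eisenstein series congruent to $1$ bmod $p$ and the level-$p$ descent) is a fair account of what \cite{B-N} actually prove, but it is not needed here since both you and the authors ultimately invoke that theorem as a black box.
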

\begin{proof}
We apply Theorem 5 of \cite{B-N}, where
\[
f=\vartheta_S^{(n)}\in M_1(\Gamma_n,\chi_p)^0,\qquad
g=G\in M_{1+\frac{p-1}{2}}(\Gamma_n).
\]
\end{proof}
%%%%%%%%%%%%%%%%%%%%%%%%%%%%%%%%%%%%%%%%%%%%
\subsection{Theta operator and mod $p$ singular modular form}
\label{sec:2-4}
First, we will define the theta operator, which is a differential operator. For 
$F=\sum a(F;T)q^T\in M_k(\Gamma_n)$, we associate with it the formal power series
\[
\varTheta (F):=\sum a(F;T)\cdot {\rm det}(T)q^T\in 
\mathbb{C}[q_{ij},q_{ij}^{-1}][\![q_{11},\ldots,q_{nn}]\!].
\]
This is called {\it the theta operator} (cf. \cite{B-N0}). For $n=1$, the classical theta operator
was studied by Ramanujan \cite{Sw}. It should be noted that
$\varTheta (F)$ is not necessarily of modular form.

Next, we introduce the mod $p$ singular modular form \cite{B-K}. 
For a prime number $p$, a modular form 
$F=\sum a(F;T)q^T\in M_k(\Gamma_n)_{\mathbb{Z}_{(p)}}$ is called 
{\it the mod $p$ singular modular form} with the (nontrivial) maximal $p$-rank $r$
($r<n$) if $F$ has the following property:
\[
a(F;T) \equiv 0 \pmod{p}
\]
for all $T\in {\rm Sym}_n^*(\mathbb{Z})$ with $r+1\leq {\rm rank}(T)\leq n$ and
\[
a(F;T)\not\equiv 0 \pmod{p}
\]
for some $T$ with rank$(T)=r$.

If $F=\sum a(F;T)q^T\in M_k(\Gamma_n)_{\mathbb{Z}_{(p)}}$ is a mod $p$ singular 
modular form,
then
\[
\varTheta (F) \equiv 0 \pmod{p}.
\]
Namely, $F$ is an element of the mod $p$ kernel of the theta operator.

In this note, we will show that the theta series associated with some Niemeier lattices
are examples of such forms.
%%%%%%%%%%%%%%%%%%%%%%%%%%%%%%%%%%%%%%%%%%%%%%%%%%%%
\subsection{Sturm bound for Siegel modular forms}
\label{sec:2-5}
In this section, we introduce a result of Richter and Raum \cite{R-R} concerning the so-called
Sturm bound. From this result, we can specify a modular form by using mod $p$.
%%%%%%%%%%
\begin{theorem}
\label{theorem4}
{\rm \cite{R-R}}
Assume that $p$ is a prime number and $F=\sum a(F;T)q^T$ is a modular form
in $M_k(\Gamma_n)_{\mathbb{Z}_{(p)}}$ $(n\geq 2)$. If
\[
a(F;T) \equiv 0 \pmod{p}
\]
for all $0\leq T=(t_{ij})\in {\rm Sym}_n^*(\mathbb{Z})$ with
\[
t_{ii}\leq \left(\frac{4}{3}\right)^n\frac{k}{16}, \qquad (i=1,\ldots,n),
\]
then
\[
a(F;T) \equiv 0 \pmod{p}
\]
for all $0\leq T\in {\rm Sym}_n^*(\mathbb{Z})$.
\end{theorem}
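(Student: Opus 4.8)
This result is quoted from \cite{R-R}; the following sketches the shape of a proof. Write $\bar F$ for the reduction of $F$ modulo $p$; the plan is to suppose $\bar F\neq 0$ and derive a contradiction. First I would use the invariance $a(F;{}^t\!UTU)=(\det U)^k\,a(F;T)$ for $U\in GL_n(\mathbb{Z})$ to replace each $T$ by a Minkowski-reduced representative without changing whether the coefficient vanishes modulo $p$, so that the hypothesis becomes: $a(F;T)\equiv 0\pmod p$ for every Minkowski-reduced $T$ all of whose diagonal entries are $\leq B_n(k):=(4/3)^n k/16$.

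The next step translates this diagonal bound into a bound on the determinant. Reduction theory of positive definite quadratic forms gives, for a Minkowski-reduced $T$, an inequality $t_{11}\cdots t_{nn}\leq\gamma_n\det(T)$ with $\gamma_n$ an explicit constant (a power of $4/3$), together with $1\leq t_{11}\leq\cdots\leq t_{nn}$; hence the diagonal of a reduced form is small precisely when its determinant is small. Thus the hypothesis is equivalent to the vanishing modulo $p$ of all $a(F;T)$ with $\det(T)$ below an explicit threshold $c_n(k)$, and the factor $(4/3)^n$ in the statement is exactly what accumulates from one use of reduction theory per degree in the induction below.

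The heart of the matter is then an induction on $n$, the base case $n=1$ being the classical Sturm bound for elliptic modular forms, for which $(4/3)\cdot k/16=k/12$ is the familiar constant. From the Fourier--Jacobi expansion of $F$ along a splitting of $\mathbb{H}_n$ into $\mathbb{H}_{n-1}$ and $\mathbb{H}_1$, the zeroth coefficient is the Siegel $\Phi$-operator value $\Phi(F)\in M_k(\Gamma_{n-1})_{\mathbb{Z}_{(p)}}$ and the coefficients $\phi_m$ $(m\geq 1)$ are Jacobi forms of weight $k$. If $\Phi(F)\not\equiv 0\pmod p$, its small-determinant Fourier coefficients are among those assumed to vanish, so the inductive hypothesis applied to $\Phi(F)$ gives a contradiction. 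If $\Phi(F)\equiv 0\pmod p$, then $F$ is cuspidal modulo $p$, and I would bound the $\phi_m$ directly: by the theta decomposition each $\phi_m$ splits into vector-valued Siegel modular forms of degree $n-1$ and weight $k-\tfrac12$, to which the inductive hypothesis again applies, and only $\phi_m$ with $m$ below a bound depending on $k$ and $n$ need be treated, since a nonzero Siegel modular form of weight $k$ has bounded vanishing order along the boundary. Either way one contradicts $\bar F\neq 0$.

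The step I expect to be the main obstacle is the passage from characteristic zero to characteristic $p$ with a bound independent of $p$: the relevant vanishing statements, both along the boundary and for the elliptic base case, are naturally proved in characteristic zero via positivity of the Hodge bundle on a compactification, while dimensions of spaces of mod $p$ modular forms can jump, so a characteristic-zero dimension count does not transfer directly. As in Sturm's original argument, the device is to multiply $F$ by a modular form that is $\equiv 1\pmod p$, for instance a theta series of a suitable unimodular-type lattice or a power of a Hasse-type invariant, so as to raise the weight to a value where a clean characteristic-zero vanishing theorem applies, while tracking how the bound degrades under this operation. Keeping that degradation and the iterated reduction-theory constant under control is the delicate part carried out in \cite{R-R}.
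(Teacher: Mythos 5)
The paper offers no proof of this statement at all: it is imported verbatim from Richter and Westerholt-Raum \cite{R-R}, so there is no internal argument to compare yours against, and the only meaningful benchmark is \cite{R-R} itself. Your outline does track the general shape of that argument: Minkowski reduction together with $GL_n(\mathbb{Z})$-invariance of the coefficients accounts for the powers of $4/3$, the base case $n=1$ is the classical Sturm bound $k/12$, and the induction on the degree runs through the Fourier--Jacobi expansion, with the theta decomposition converting Jacobi forms into modular objects of degree $n-1$, plus Sturm's device of multiplying by a form congruent to $1$ modulo $p$.

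As a proof, however, the sketch has genuine gaps. First, the induction as you set it up does not close: after the theta decomposition the Fourier--Jacobi coefficients $\phi_m$ become \emph{vector-valued} forms of half-integral weight $k-\tfrac12$ transforming under a Weil representation, while the statement you are inducting on covers only scalar-valued forms of integral weight for $\Gamma_{n-1}$; one needs a stronger inductive statement (a Sturm-type bound for vector-valued or Jacobi forms), which is precisely what \cite{R-R} supplies. Second, the claim that only finitely many $\phi_m$ need be treated ``since a nonzero Siegel modular form of weight $k$ has bounded vanishing order along the boundary'' is exactly the characteristic-$p$ crux: such vanishing-order statements are proved in characteristic zero, and their mod $p$ analogues are the hard content of the theorem; you flag this honestly in your last paragraph, but the argument as written silently relies on it, so it is a deferral, not a proof. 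Third, the asserted ``equivalence'' of the diagonal bound with a determinant threshold is only a one-way implication after reduction (and semi-definite $T$, with $\det T=0$, must be routed through the $\Phi$-operator rather than through any determinant bound). In short: your proposal is a reasonable reconstruction of the strategy of \cite{R-R}, and in that sense it attempts more than the paper itself does, but it is an outline with the decisive steps missing rather than a complete argument.
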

%%%%%%%%%%%%%%%
\begin{corollary}
\label{corollary2}
Let $F=\sum a(F;T)q^T$ be a modular form in $M_k(\Gamma_n)_{\mathbb{Z}_{(p)}}$.
If
\[
a(F;T)\in \mathbb{Z}
\]
for all $0\leq T\in {\rm Sym}_n^*(\mathbb{Z})$ with
\[
t_{ii}\leq \left(\frac{4}{3}\right)^n\frac{k}{16}, \qquad (i=1,\ldots,n),
\]
then $F\in M_k(\Gamma_n)_{\mathbb{Z}}$.
\end{corollary}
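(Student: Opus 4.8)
The plan is to deduce the corollary from the mod $p$ Sturm bound of Theorem \ref{theorem4}, applied one prime at a time, together with the standard fact that $M_k(\Gamma_n)$ admits a rational structure. First I would observe that since $\mathbb{Z}_{(p)}\subset\mathbb{Q}$, the hypothesis $F\in M_k(\Gamma_n)_{\mathbb{Z}_{(p)}}$ already forces every Fourier coefficient $a(F;T)$ to be rational. Next I would invoke the well-known fact that $M_k(\Gamma_n)$ has a $\mathbb{C}$-basis consisting of modular forms with integral Fourier coefficients, equivalently $M_k(\Gamma_n)_{\mathbb{Q}}=M_k(\Gamma_n)_{\mathbb{Z}}\otimes_{\mathbb{Z}}\mathbb{Q}$; consequently every form in $M_k(\Gamma_n)_{\mathbb{Q}}$ has bounded denominators. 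In particular the set $\{\,N\in\mathbb{Z}_{>0}\mid NF\in M_k(\Gamma_n)_{\mathbb{Z}}\,\}$ is nonempty, and I would let $N$ be its least element and put $G:=NF\in M_k(\Gamma_n)_{\mathbb{Z}}$, so that the assertion of the corollary is reduced to showing $N=1$.

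To prove $N=1$ I would argue by contradiction: assume $N>1$ and choose a prime $q\mid N$. For every $T\in{\rm Sym}_n^*(\mathbb{Z})$ with $t_{ii}\le(4/3)^n k/16$ the hypothesis gives $a(F;T)\in\mathbb{Z}$, hence $a(G;T)=N\,a(F;T)\equiv 0\pmod{q}$ because $q\mid N$. Since $G\in M_k(\Gamma_n)_{\mathbb{Z}}\subseteq M_k(\Gamma_n)_{\mathbb{Z}_{(q)}}$, Theorem \ref{theorem4}, applied with the prime $q$ in place of $p$, then yields $a(G;T)\equiv 0\pmod{q}$ for all $0\le T\in{\rm Sym}_n^*(\mathbb{Z})$. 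Therefore every Fourier coefficient of $G$ is divisible by $q$, so $G/q=(N/q)F$ again lies in $M_k(\Gamma_n)_{\mathbb{Z}}$; but $1\le N/q<N$ contradicts the minimality of $N$. Hence $N=1$ and $F=G\in M_k(\Gamma_n)_{\mathbb{Z}}$, as required. For $n=1$ one simply replaces Theorem \ref{theorem4} by the classical Sturm bound for elliptic modular forms.

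The only step that is not purely formal is the appeal to the rational structure of $M_k(\Gamma_n)$ (the boundedness of the denominators of $F$), and this is the point I would state most carefully; it is standard, and in every application of the corollary in this note it is transparent, because there $F$ is manifestly a $\mathbb{Q}$-linear combination of modular forms with integral Fourier coefficients. I would also point out that the hypothesis $F\in M_k(\Gamma_n)_{\mathbb{Z}_{(p)}}$ enters only through the rationality of the Fourier coefficients: the argument actually proves that any $F\in M_k(\Gamma_n)_{\mathbb{Q}}$ whose coefficients $a(F;T)$ with $t_{ii}\le(4/3)^n k/16$ all lie in $\mathbb{Z}$ must already belong to $M_k(\Gamma_n)_{\mathbb{Z}}$.
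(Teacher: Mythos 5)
Your proof is correct, and it is essentially the intended argument: the paper states Corollary \ref{corollary2} without proof as an immediate consequence of Theorem \ref{theorem4}, and the natural deduction is exactly your denominator-clearing step (take the minimal $N$ with $NF\in M_k(\Gamma_n)_{\mathbb{Z}}$ and eliminate each prime $q\mid N$ by applying the mod $q$ Sturm bound to $NF$). You also correctly isolate the one non-formal ingredient, the bounded-denominator fact $M_k(\Gamma_n)_{\mathbb{Q}}=M_k(\Gamma_n)_{\mathbb{Z}}\otimes_{\mathbb{Z}}\mathbb{Q}$, which the paper leaves implicit, and you handle the $n=1$ case via the classical Sturm bound since Theorem \ref{theorem4} is stated for $n\geq 2$.
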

%%%%%%%%%%%%%%%%%%%%%%%%%%
\subsection{Niemeier lattices}
\label{sec:2-6}
A Niemeier lattice is one of the 24 positive definite, even, unimodular lattices of
rank 24, which were classified by H.~Niemeier If $\mathcal{L}$ is a Niemeier
lattice, then the corresponding theta series $\vartheta_{\mathcal{L}}^{(n)}$
becomes a Siegel modular form of weight 12:
\[
\vartheta_{\mathcal{L}}^{(n)}\in M_{12}(\Gamma_n)_{\mathbb{Z}}.
\]
As stated in the Introduction, for the Niemeier lattices, we use the notation $\alpha$, $\beta$,\ldots, $\omega$, as defined by
Conway and Sloane (\cite{C-S}, p. 407, Table 16.1).
We write the associated theta series as
$\vartheta_{\alpha}^{(n)}$, $\vartheta_{\beta}^{(n)}$, $\ldots$. One of our
main purposes in this note is to study these theta series.
%%%%%%%%%%%%%%%%%%%%%%%%%%%%%%%%%%%%%%%%%%%%%%%%%%%%%%%%%%%%%%%%%%%%%%%%%%%%%%%%%%%%%%%
\section{Degree 2 theta series for Niemeier lattices}
\label{sec:3}
%%%%%%%%%%
\subsection{Igusa's generators}
\label{sec:3-1}
Let $E_k^{(n)}$ be the Eisenstein series of degree $n$ and weight $k$, normalized 
as $a(E_k^{(n)};0_n)=1$. 
It is well known that $E_k^{(n)}\in M_k(\Gamma_n)_{\mathbb{Q}}$.

We set
\[
M(\Gamma_2)_{\mathbb{Z}}=\bigoplus_{k\in\mathbb{Z}} M_k(\Gamma_2)_{\mathbb{Z}}.
\]
Igusa \cite{Igusa} gave a minimal set of generators of the ring $M(\Gamma_2)_{\mathbb{Z}}$
over $\mathbb{Z}$. The set consists of 15 modular forms:
\[
M(\Gamma_2)_{\mathbb{Z}}=\mathbb{Z}[X_4,X_6,X_{10},X_{12},Y_{12},X_{16},\ldots, X_{48}].
\]
Here, the subscripts denote the weights, and
\[
X_4=E_4^{(2)},\quad X_6=E_6^{(2)},\quad X_{10}=\chi_{10},\quad X_{12}=\chi_{12},
\]
where $\chi_k\,(k=10,12)$ is Igusa's cusp form normalized as
\[
a\left(\chi_k;\begin{pmatrix}1&\tfrac{1}{2}\\ \tfrac{1}{2}&1\end{pmatrix}\right)=1.
\]
There are two modular forms of weight 12. The form $Y_{12}$ has the
$q$-expansion
\begin{align*}
Y_{12}=& (q_{11}+q_{22})-24(q_{11}^2+q_{22}^2)\\
       & +(q_{12}^{-2}+116q_{12}^{-1}+1206+116q_{12}+q_{12}^2)q_{11}q_{22}+\cdots
\end{align*}
and satisfies
\[
\varPhi(Y_{12})=\Delta,
\]
where $\varPhi$ is the Siegel operator, and
\begin{align*}
\Delta:&= \frac{1}{1728}\left( (E_4^{(1)})^3-(E_6^{(1)})^2 \right)\\
       &= q-24q^2+252q^3-1472q^4\cdots \in S_{12}(\Gamma_1)_{\mathbb{Z}}.
\end{align*}
For use below, we now give the expressions for $X_{12}^{(2)}=X_{12}$ and $Y_{12}^{(2)}=Y_{12}$
using the Eisenstein series:
\begin{equation}
\label{3-1-(1)}
\begin{split}
& X_{12}^{(2)} = a_1\cdot (E_4^{(2)})^3+a_2\cdot (E_6^{(2)})^2+a_3\cdot E_{12}^{(2)},
\vspace{1mm}
\\
&
a_1=\frac{131\cdot 593}{2^{11}\cdot 3^4\cdot 5^3\cdot 337},\;\;
               a_2=\frac{131\cdot 593}{2^{10}\cdot 3^6\cdot 7^2\cdot 337},\\
              & a_3=\frac{-131\cdot 593\cdot 691}{2^{11}\cdot 3^6\cdot 5^3\cdot 7^2\cdot 337},
\end{split}
\end{equation}
\begin{equation}
\label{3-1-(2)}
\begin{split}
& Y_{12}^{(2)} = b_1\cdot (E_4^{(2)})^3+b_2\cdot (E_6^{(2)})^2+b_3\cdot E_{12}^{(2)},\\
&
b_1=\frac{41\cdot 71\cdot 109}{2^7\cdot 3^3\cdot 5^3\cdot 337},\;\;
               b_2=\frac{1759}{2^2\cdot 3^4\cdot 7^2\cdot 337},\\
             &  b_3=\frac{-131\cdot 593\cdot 691}{2^7\cdot 3^4\cdot 5^3\cdot 7^2\cdot 337}.
\end{split}
\end{equation}
%%%%%%%%%%%%%%%%%%%%%%%%%%%%%%%%%%%%%%%%%%%%%%%%%%%%%%%%%%%%%%%%%%%%
\subsection{Theta series for Niemeier lattices of degree 2}
\label{sec:3-2}
Let $\mathcal{L}$ be a Niemeier lattice. It is known that if the
Coxeter number of $\mathcal{L}$ is $h$, then
the $q$-expansion of $\vartheta_{\mathcal{L}}^{(1)}$ is given as follows:
\[
\vartheta_{\mathcal{L}}^{(1)}=1+24h\cdot q+\cdots .
\]
Since $\vartheta_{\mathcal{L}}^{(1)}\in\mathbb{Z}[E_4^{(1)},\Delta]$, the
form $\vartheta_{\mathcal{L}}^{(1)}$ can be expressed as
\begin{equation}
\label{equdegree1}
\vartheta_{\mathcal{L}}^{(1)}=(E_4^{(1)})^3+(24h-720)\Delta.
\end{equation}
This identity is a starting point for our study.
%%%%%%%%%%%%%
\begin{theorem}
\label{degree2}
Let $\mathcal{L}$ be a Niemeier lattice with Coxeter number $h$. Then
we have
\begin{equation}
\label{equdegree2}
\vartheta_{\mathcal{L}}^{(2)}=(E_4^{(2)})^3+(24h-720)Y_{12}^{(2)}
                              +(48h^2-2800h+43200)X_{12}^{(2)},
\end{equation}
where $X_{12}^{(2)}$, $Y_{12}^{(2)}$ are Igusa's generators, which were introduced in
\ref{sec:3-1}.
\end{theorem}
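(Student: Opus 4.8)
The plan is to pin down the weight-12 Siegel modular form $\vartheta_{\mathcal{L}}^{(2)}$ by working in the finite-dimensional space $M_{12}(\Gamma_2)$ and exploiting two pieces of data attached to a Niemeier lattice: its degree-1 theta series (hence its Coxeter number $h$) and the fact that theta series are compatible with the Siegel $\Phi$-operator. First I would recall that $M_{12}(\Gamma_2)$ is spanned by $(E_4^{(2)})^3$, $(E_6^{(2)})^2$, $E_{12}^{(2)}$ and the two weight-12 cusp-type forms; more conveniently, using Igusa's description from Section~\ref{sec:3-1}, the cuspidal part $S_{12}(\Gamma_2)$ is spanned by $X_{12}^{(2)}=\chi_{12}$ and $Y_{12}^{(2)}=Y_{12}$, while the Eisenstein-type part is one-dimensional modulo cusp forms. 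So any $F\in M_{12}(\Gamma_2)$ with $\Phi(F)=(E_4^{(1)})^3+(24h-720)\Delta$ can be written as
\[
\vartheta_{\mathcal{L}}^{(2)} = (E_4^{(2)})^3 + (24h-720)\,Y_{12}^{(2)} + c_{\mathcal{L}}\,X_{12}^{(2)}
\]
for a unique scalar $c_{\mathcal{L}}$, because $X_{12}^{(2)}$ is a cusp form (so it does not affect $\Phi$), $\Phi(Y_{12}^{(2)})=\Delta$, $\Phi((E_4^{(2)})^3)=(E_4^{(1)})^3$, and $\{X_{12}^{(2)},Y_{12}^{(2)}\}$ is a basis of $S_{12}(\Gamma_2)$. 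Here I use the identity~(\ref{equdegree1}) to identify $\Phi(\vartheta_{\mathcal{L}}^{(2)})=\vartheta_{\mathcal{L}}^{(1)}$.

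The remaining task is to show $c_{\mathcal{L}} = 48h^2 - 2800h + 43200$. Since $c_{\mathcal{L}}$ is an affine-plus-quadratic expression in $h$ (one extra Fourier coefficient of $\vartheta_{\mathcal{L}}^{(2)}$ beyond those controlling $h$), it suffices to read off one suitable Fourier coefficient of $\vartheta_{\mathcal{L}}^{(2)}$ and express it in terms of $h$. The natural choice is the coefficient $a\!\left(\vartheta_{\mathcal{L}}^{(2)};\left(\begin{smallmatrix}1&1/2\\1/2&1\end{smallmatrix}\right)\right)$, or equivalently the number of pairs of lattice vectors with prescribed Gram matrix $\left(\begin{smallmatrix}2&1\\1&2\end{smallmatrix}\right)$, i.e. the number of ordered pairs $(v,w)$ of minimal vectors ($Q(v)=Q(w)=2$ for the normalized even form, where $S=2S_{\mathcal L}^{\mathrm{norm}}$, so norm-2 in the usual lattice normalization) with $\langle v,w\rangle = 1$. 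For a root lattice component, the number of norm-2 vectors (roots) is $24h$ by definition of the Coxeter number, and the count of adjacent root pairs is a standard combinatorial quantity for each of the 23 root systems occurring; aggregated over all components of a Niemeier lattice this again turns out to be a fixed polynomial in $h$. I would cite or verify this count — this is where the identity $48h^2 - 2800h + 43200$ is forced — and then match it against $a\!\left((E_4^{(2)})^3;\cdot\right) + (24h-720)\,a\!\left(Y_{12}^{(2)};\cdot\right) + c_{\mathcal{L}}\,a\!\left(X_{12}^{(2)};\cdot\right)$ using the normalizations $a(\chi_{12};\left(\begin{smallmatrix}1&1/2\\1/2&1\end{smallmatrix}\right))=1$ and the $q$-expansion of $Y_{12}$ quoted above, plus the known value $a((E_4^{(2)})^3;\left(\begin{smallmatrix}1&1/2\\1/2&1\end{smallmatrix}\right))$.

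I expect the main obstacle to be the combinatorial step: showing that the pair-count $\#\{(v,w): Q(v)=Q(w)=2,\ \langle v,w\rangle=1\}$ for a Niemeier lattice depends on the lattice only through $h$ and equals exactly $48h^2 - 2800h + 43200$. The quadratic term $48h^2$ should come from cross-terms between distinct irreducible components scaled to a common Coxeter number (each contributing $24h$ roots, and for root systems all components of an unglued piece share the same $h$), while the lower-order terms encode the intrinsic count of adjacent roots within a single component; one must check that the anomalies of the Leech lattice (no roots, $h=0$) and of components of unequal type are handled correctly — fortunately, in each Niemeier lattice all root-system components have the same Coxeter number, which is what makes the formula uniform. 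An alternative, less conceptual route that sidesteps the root-system bookkeeping is to invoke the Sturm-type bound (Theorem~\ref{theorem4} / Corollary~\ref{corollary2}) together with the linear-algebra fact that $M_{12}(\Gamma_2)$ has small dimension: evaluate both sides on finitely many $T$, observe that the difference is a cusp form all of whose relevant coefficients vanish, and conclude equality; but even this needs the value of $c_{\mathcal{L}}$ as a function of $h$, which again reduces to one honest Fourier-coefficient computation. Finally, integrality of the resulting form is automatic since $X_{12}^{(2)},Y_{12}^{(2)}\in M_{12}(\Gamma_2)_{\mathbb{Z}}$, $E_4^{(2)}\in M_4(\Gamma_2)_{\mathbb{Z}}$, and $24h-720$, $48h^2-2800h+43200\in\mathbb{Z}$.
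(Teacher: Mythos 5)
Your opening step is the same as the paper's: apply the Siegel operator to see that $\vartheta_{\mathcal L}^{(2)}-(E_4^{(2)})^3-(24h-720)Y_{12}^{(2)}$ lies in the kernel of $\varPhi$ and hence equals $c_{\mathcal L}\,X_{12}^{(2)}$ for a single unknown constant. But you misstate the structural fact you invoke: $S_{12}(\Gamma_2)$ is one-dimensional, spanned by $X_{12}^{(2)}=\chi_{12}$, and $Y_{12}^{(2)}$ is \emph{not} a cusp form (you yourself use $\varPhi(Y_{12}^{(2)})=\Delta\neq 0$ in the same sentence), so ``$\{X_{12}^{(2)},Y_{12}^{(2)}\}$ is a basis of $S_{12}(\Gamma_2)$'' is false. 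What you actually need, and what is true, is that $(E_4^{(2)})^3$, $Y_{12}^{(2)}$, $X_{12}^{(2)}$ span $M_{12}(\Gamma_2)$ and that $\ker\varPhi\cap M_{12}(\Gamma_2)=S_{12}(\Gamma_2)=\mathbb{C}X_{12}^{(2)}$; with that correction this part is sound and coincides with the paper's reduction (\ref{c1}).

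The genuine gap is the determination of $c_{\mathcal L}$, which is the entire content of the theorem and which you leave as ``I would cite or verify this count.'' Moreover, the heuristic you give for that count is wrong: in the notation (\ref{abb}), $a(\vartheta_{\mathcal L}^{(2)};[1,1,1])$ counts ordered pairs of roots $v,w$ with $(v,w)=1$, and roots lying in distinct irreducible components of the root system are orthogonal, so every such pair lies in a \emph{single} component; the quadratic term cannot come from ``cross-terms between distinct components.'' The correct uniform input is that in an irreducible simply laced root system with Coxeter number $h$ (roots of norm $2$) each root has exactly $2h-4$ roots at inner product $1$ (e.g.\ from the identity $\sum_{\beta}(\alpha,\beta)^2=4h$), so a Niemeier lattice, whose root system has rank $24$ with all components of Coxeter number $h$, contains $24h(2h-4)=48h^2-96h$ such pairs (also valid for the Leech lattice, $h=0$). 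Combining this with $a(X_{12}^{(2)};[1,1,1])=1$, $a(Y_{12}^{(2)};[1,1,1])=116$ and $a((E_4^{(2)})^3;[1,1,1])=40320$ (the count for $\gamma=E_8^3$) forces $c_{\mathcal L}=48h^2-2880h+43200$; note the coefficient is $2880$, not the $2800$ you copied from the statement (a typo there; compare Theorem \ref{introtheorem} and the requirement $c_{\mathcal L}(30)=0$ coming from $\vartheta_\gamma^{(2)}=(E_4^{(2)})^3$), and your computation, once actually carried out, must produce $2880$. The paper avoids all root-system bookkeeping by a cheaper device you did not consider: restrict (\ref{c1}) to the diagonal, where $\vartheta_{\mathcal L}^{(2)}$ becomes $\vartheta_{\mathcal L}^{(1)}(z_{11})\vartheta_{\mathcal L}^{(1)}(z_{22})$, use $Y_{12}^{(2)}\mapsto E_4^{(1)}(z_{11})\Delta(z_{22})+E_4^{(1)}(z_{22})\Delta(z_{11})$ and $X_{12}^{(2)}\mapsto 12\,\Delta(z_{11})\Delta(z_{22})$, and compare the coefficients of $q_{11}q_{22}$, so that only the degree-one identity (\ref{equdegree1}) is needed. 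As written, your proposal reduces the theorem to an unproved and mis-motivated combinatorial identity, so it is incomplete; your fallback via the Sturm bound does not help, since, as you admit, it still presupposes the value of $c_{\mathcal L}$ as a function of $h$.
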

\begin{proof}
It follows from 
\[
\varPhi(\vartheta_{\mathcal{L}}^{(2)})=\vartheta_{\mathcal{L}}^{(1)},\quad
\varPhi(E_4^{(2)})=E_4^{(1)},\quad
\varPhi(Y_{12}^{(2)})=\Delta
\]
that $\vartheta_{\mathcal{L}}^{(2)}$ has the following expression:
\begin{equation}
\label{c1}
\vartheta_{\mathcal{L}}^{(2)}=(E_4^{(2)})^3+(24h-720)Y_{12}^{(2)}
                              +c_1\cdot X_{12},
\end{equation}
where $c_i=c_i(h)$ is a constant. We shall determine the value $c_1$.

We consider the diagonal restriction of both sides of (\ref{c1}).
We have
\begin{align*}
\vartheta_{\mathcal{L}}^{(2)}\left(\begin{pmatrix}z_{11}& 0\\ 0&z_{22}\end{pmatrix}\right)
 &=\vartheta_{\mathcal{L}}^{(1)}(z_{11})\cdot \vartheta_{\mathcal{L}}^{(1)}(z_{22})\\
 &=1+24h\cdot q_{11}+24h\cdot q_{22}+(24h)^2\cdot q_{11}q_{22}+\cdots,\\
(E_4^{(2)})^3\left(\begin{pmatrix}z_{11}& 0\\ 0&z_{22}\end{pmatrix}\right)
  & =(E_4^{(1)}(z_{11})\cdot E_4^{(1)}(z_{22}))^3\\
  & =1+720\cdot q_{11}+720\cdot q_{22}+720^2\cdot q_{11}q_{22}+\cdots,\\
Y_{12}^{(2)}\left(\begin{pmatrix}z_{11}& 0\\ 0&z_{22}\end{pmatrix}\right)
 &=E_4^{(1)}(z_{11})\Delta(z_{22})+E_4^{(1)}(z_{22})\Delta(z_{11})\\
 &=q_{11}+q_{22}+1440\cdot q_{11}q_{22}+\cdots,\\
X_{12}^{(2)}\left(\begin{pmatrix}z_{11}& 0\\ 0&z_{22}\end{pmatrix}\right)
 &=2^2\cdot 3\Delta(z_{11})\Delta(z_{22})\\
 &=12\cdot q_{11}q_{22}+\cdots,
\end{align*}
where $q_{ii}={\rm exp}(2\pi\sqrt{-1}z_{ii})$. Comparing the coefficients of
$q_{11}q_{22}$, we obtain the identity
\[
(24h)^2=720^2+1440(24h-720)+12\cdot c_1.
\]
This implies
\[
c_1=48h^2-2880h+43200.
\]
This completes the proof of Theorem \ref{degree2}.
%\hfill$\square$
%\\
\end{proof}
%%%%%%%%%%%%%%%%%%%%%%%%%%%%%%%%%%%%%%%%%%%%%%%%%%%%%%%%%%%%%%%%%%%%%%%%%%%%%%%%%%%%%%%
\section{Degree 3 theta series for Niemeier lattices}
\label{sec:4}
%%%%%%%%%%
\subsection{Siegel modular forms of degree 3 and weight 12}
\label{sec:4-1}

Miyawaki \cite{Miyawaki} constructed a cusp form $F_{12}\in S_{12}(\Gamma_3)$
by using theta series with a spherical polynomial.
In this section, we introduce two modular forms of degree 3 and weight 12. We will then show
that they have integral Fourier coefficients.

We set
\begin{align*}
& X_{12}^{(3)}:=a_1\cdot (E_4^{(3)})^3+a_2\cdot (E_6^{(3)})^2+a_3\cdot E_{12}^{(3)}
                +\frac{4740}{337}F_{12},\\
& Y_{12}^{(3)}:=b_1\cdot (E_4^{(3)})^3+b_2\cdot (E_6^{(3)})^2+b_3\cdot E_{12}^{(3)}
                -\frac{356411}{337}F_{12},
\end{align*}
where $F_{12}\in S_{12}(\Gamma_3)_{\mathbb{Z}}$ is Miyawaki's cusp form \cite{Miyawaki}, 
and $a_i$ and $b_i$ are constants given in (\ref{3-1-(1)}), and (\ref{3-1-(2)}) in Section \ref{sec:3-1}.

We shall show that they have integral Fourier coefficients. Since $F_{12}$ are in cusp
form, we have
\[
\varPhi(X_{12}^{(3)})=X_{12}^{(2)},\qquad \varPhi(Y_{12}^{(3)})=Y_{12}^{(2)}.
\]
This means that, if rank$(T)<3$, then all of the Fourier coefficients $a(X_{12}^{(3)};T)$
and $a(Y_{12}^{(3)};T)$ are integral.

In the case that rank$(T)=3$, we have the following numerical data for
the Fourier coefficients of $X_{12}^{(3)}$, $Y_{12}^{(3)}$, and $F_{12}$:

\begin{table}[htbp]
\begin{center}
\begin{tabular}{c|ccc}

     $T$             &    $a(X_{12}^{(3)};T)$  &  $a(Y_{12}^{(3)};T)$  & $a(F_{12};T)$ \\ \hline
$[1,1,1;1,1,1]$      &          1            &       1       &       1     \\
$[1,1,1;0,0,1]$      &         84            &      7674     &      18     \\
$[1,1,1;0,0,0]$      &        1132           &     114476    &     164

\end{tabular}
\end{center}
\end{table}
Here, we used the abbreviation
\begin{equation}
\label{matrixdegree3}
[a,b,c;d,e,f]:=
\begin{pmatrix}
a          &    \tfrac{f}{2}         & \tfrac{e}{2} \\
\tfrac{f}{2}       &      b          & \tfrac{d}{2} \\
\tfrac{e}{2}       &    \tfrac{d}{2}       &   c
\end{pmatrix}
\in {\rm Sym}_3^*(\mathbb{Z}).
\end{equation}
%%%%%%%%%%%%%%%%%%%%%%%%%%%%
\begin{proposition}
\label{prop}
The modular forms $X_{12}^{(3)}$, $Y_{12}^{(3)}$, and $F_{12}$ have integral
Fourier coefficients:
\[
X_{12}^{(3)},\;Y_{12}^{(3)}\in M_{12}(\Gamma_3)_{\mathbb{Z}},\quad
F_{12}\in S_{12}(\Gamma_3)_{\mathbb{Z}}.
\]
\end{proposition}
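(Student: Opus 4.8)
The plan is to establish integrality of the rank-$3$ Fourier coefficients of $X_{12}^{(3)}$, $Y_{12}^{(3)}$, and $F_{12}$ by combining the Sturm-type bound of Corollary \ref{corollary2} with the observation that these three forms span the same space, modulo forms supported on matrices of rank $\leq 2$, as certain Niemeier theta series, whose coefficients are manifestly integral. First, I would recall that for rank $T<3$ the Siegel operator identities $\varPhi(X_{12}^{(3)})=X_{12}^{(2)}$ and $\varPhi(Y_{12}^{(3)})=Y_{12}^{(2)}$, together with Igusa's theorem that $X_{12}^{(2)},Y_{12}^{(2)}\in M_{12}(\Gamma_2)_{\mathbb{Z}}$, already give integrality; so the entire problem reduces to the coefficients $a(F;T)$ with $\operatorname{rank}(T)=3$.

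Next, I would invoke Corollary \ref{corollary2} with $k=12$ and $n=3$: it suffices to check $p$-integrality (in fact integrality) of $a(F;T)$ for all $0\le T=(t_{ij})\in\operatorname{Sym}_3^*(\mathbb{Z})$ with $t_{ii}\le (4/3)^3\cdot 12/16=(64/27)\cdot(3/4)=16/9<2$, i.e.\ $t_{ii}\le 1$ for $i=1,2,3$. Up to $\mathrm{GL}_3(\mathbb{Z})$-equivalence there are only finitely many such $T$ — indeed only a handful of reduced forms with all diagonal entries equal to $1$ (together with the lower-rank ones already handled), and the rank-$3$ representatives are exactly $[1,1,1;1,1,1]$, $[1,1,1;0,0,1]$ (and its equivalents), and $[1,1,1;0,0,0]$. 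These are precisely the three matrices tabulated above. So the proposition follows once we know that the tabulated values are correct integers; that is, the content of the proof is to justify the numerical table.

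To justify the table, the cleanest route is to write $F_{12}$, $X_{12}^{(3)}$, $Y_{12}^{(3)}$ as explicit linear combinations of the degree-$3$ theta series $\vartheta_{\mathcal{L}}^{(3)}$ of Niemeier lattices (or of $(E_4^{(3)})^3$ and such theta series), using the formulas defining $X_{12}^{(3)},Y_{12}^{(3)}$ plus Miyawaki's construction of $F_{12}$ and known values of $E_4^{(3)},E_6^{(3)},E_{12}^{(3)}$ at small $T$; since $M_{12}(\Gamma_3)$ is finite-dimensional and spanned by such forms, and each $\vartheta_{\mathcal{L}}^{(3)}$ lies in $M_{12}(\Gamma_3)_{\mathbb{Z}}$ by Section \ref{sec:2-3}, one computes the relevant coefficients of $\vartheta_{\mathcal{L}}^{(3)}$ directly from lattice data (counting representations $S[X]=T$) and reads off the entries of the table. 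Alternatively, one evaluates the defining expressions directly: the Eisenstein coefficients $a(E_k^{(3)};T)$ for the three listed $T$ are computable via the Siegel–Weil formula / known Fourier coefficient formulas, and $a(F_{12};T)$ comes from Miyawaki's explicit theta-with-harmonic-polynomial description. Either way, the key point is that the $337$ in the denominators of $a_1,a_2,a_3,b_1,b_2,b_3$ and the $337$ in the denominators of the $F_{12}$-coefficients cancel, leaving integers.

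The main obstacle I anticipate is purely computational bookkeeping: verifying that the delicate cancellation of the prime $337$ (and the various powers of $2,3,5,7$) actually occurs for each of the three rank-$3$ matrices, which requires accurate Fourier coefficients of $(E_4^{(3)})^3$, $(E_6^{(3)})^2$, $E_{12}^{(3)}$, and $F_{12}$ at $[1,1,1;1,1,1]$, $[1,1,1;0,0,1]$, and $[1,1,1;0,0,0]$. There is no conceptual difficulty — once Corollary \ref{corollary2} has reduced us to finitely many explicitly listed $T$, it is a finite check — but one must be careful that the chosen generators $X_{12}^{(2)},Y_{12}^{(2)}$ really are Igusa's $\mathbb{Z}$-structure generators (so that the lower-rank coefficients are integral, not merely $p$-integral for the relevant $p$), and that the list of reduced $T$ with $t_{ii}\le 1$ is genuinely exhausted by the tabulated matrices up to unimodular equivalence.
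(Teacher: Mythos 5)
Your proposal is correct and follows essentially the same route as the paper: integrality for $\operatorname{rank}(T)<3$ via the Siegel operator and Igusa's integral generators $X_{12}^{(2)},Y_{12}^{(2)}$, then Corollary \ref{corollary2} with $k=12$, $n=3$ to reduce to $t_{ii}\le 1$, and finally the finite numerical check of the three rank-$3$ classes tabulated in Section \ref{sec:4-1} (the paper likewise disposes of $F_{12}$ by Miyawaki's construction). The only difference is that you spell out how the table entries would be computed and why the denominators involving $337$ cancel, which the paper leaves as an unexplained numerical verification.
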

\begin{proof}
This is true for $F_{12}\in S_{12}(\Gamma_3)_{\mathbb{Z}}$ as a consequence of
its definition \cite{Miyawaki}.
We shall show that 
$X_{12}^{(3)}\in M_{12}(\Gamma_3)_{\mathbb{Z}}$. By Corollary \ref{corollary2},
it suffices to show that
\[
a(X_{12}^{(3)};T)\in\mathbb{Z}
\]
for all $0\leq T=(t_{ij})\in {\rm Sym}_3^*(\mathbb{Z})$ with $t_{ii}\leq 1$. This can be
confirmed from the above table. The same argument can be applied to 
$Y_{12}^{(3)}$.
%\hfill$\square$
%\\
\end{proof}
%%%%%%%%%%%%%%%%%%%%%%%%%%%%%%%%%%%%
%%%%%%%%%%%%%%%%%%%%%%%%%%%%%%%%%%%%
\subsection{Theta series for Niemeier lattices of degree 3}
\label{sec:4-2}
In this section, we show that $\vartheta_{\mathcal{L}}^{(3)}$ 
($\mathcal{L}:$ Niemeier lattice) can be expressed as an integral linear
combination of $(E_4^{(3)})^3$, $X_{12}^{(3)}$, $Y_{12}^{(3)}$, and
$F_{12}$. 
%%%%%%%%%
\begin{theorem}
\label{mainresult}
Let $\mathcal{L}$ be a Niemeier lattice with Coxeter number $h$.
Then we have
\begin{equation}
\label{degree3}
\begin{split}
\vartheta_{\mathcal{L}}^{(3)}= & 
(E_4^{(3)})^3+(24h-720)Y_{12}^{(3)}+(48h^2-2800h+43200)X_{12}^{(3)}\\
& +(48h^3-288h^2+3144h-1131120)F_{12},
\end{split}
\end{equation}
where $E_4^{(3)}$ is the Eisenstein series of degree 3 and weight 4;
and $X_{12}^{(3)}$, $Y_{12}^{(3)}$, and $F_{12}$ are modular forms
given in Section \ref{sec:4-1}. 
\end{theorem}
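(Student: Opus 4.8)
The plan is to mimic the proof of Theorem \ref{degree2} and proceed in two stages: first pin down the shape of the expansion using the Siegel operator, then determine the remaining free coefficient by a diagonal‑restriction computation. Since $\dim M_{12}(\Gamma_3)$ is spanned (over $\mathbb{C}$) by $(E_4^{(3)})^3$, $(E_6^{(3)})^2$, $E_{12}^{(3)}$, and $F_{12}$, and by construction $X_{12}^{(3)}$, $Y_{12}^{(3)}$, $F_{12}$ together with $(E_4^{(3)})^3$ form an equivalent basis, we may write
\begin{equation}
\label{shape}
\vartheta_{\mathcal{L}}^{(3)} = \gamma_0 (E_4^{(3)})^3 + \gamma_1 Y_{12}^{(3)} + \gamma_2 X_{12}^{(3)} + \gamma_3 F_{12}
\end{equation}
for constants $\gamma_i=\gamma_i(h)$. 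Applying the Siegel operator $\varPhi$ to \eqref{shape} and using $\varPhi(\vartheta_{\mathcal{L}}^{(3)})=\vartheta_{\mathcal{L}}^{(2)}$, $\varPhi(E_4^{(3)})=E_4^{(2)}$, $\varPhi(X_{12}^{(3)})=X_{12}^{(2)}$, $\varPhi(Y_{12}^{(3)})=Y_{12}^{(2)}$, and $\varPhi(F_{12})=0$ reduces \eqref{shape} modulo the kernel of $\varPhi$ to the degree‑$2$ identity \eqref{equdegree2}; since $(E_4^{(2)})^3$, $Y_{12}^{(2)}$, $X_{12}^{(2)}$ are linearly independent, comparing with Theorem \ref{degree2} forces $\gamma_0=1$, $\gamma_1=24h-720$, and $\gamma_2=48h^2-2800h+43200$. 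This leaves only $\gamma_3=\gamma_3(h)$ undetermined.

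To find $\gamma_3$ I would restrict both sides of \eqref{shape} to the diagonal $Z=\operatorname{diag}(z_{11},z_{22},z_{33})$ and compare the coefficient of $q_{11}q_{22}q_{33}$. On the left, the theta series factorizes, $\vartheta_{\mathcal{L}}^{(3)}(\operatorname{diag}(z_{ii}))=\prod_{i=1}^3 \vartheta_{\mathcal{L}}^{(1)}(z_{ii})$, so using \eqref{equdegree1} the relevant coefficient is $(24h)^3$. On the right, I need the $q_{11}q_{22}q_{33}$‑coefficients of the diagonal restrictions of $(E_4^{(3)})^3$, $Y_{12}^{(3)}$, $X_{12}^{(3)}$, and $F_{12}$. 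For the first three, the diagonal restriction of each degree‑$3$ form is governed by that of the corresponding degree‑$2$ form together with a genuinely degree‑$3$ contribution, but since $\varPhi$‑images are already fixed, the needed numbers can be read off from the known $q$‑expansions: $(E_4^{(1)})^3$ contributes $720^3$; $Y_{12}^{(3)}$ restricts (via its degree‑$2$ and degree‑$1$ behavior) to a symmetric function in $E_4^{(1)}(z_{ii})$ and $\Delta(z_{ii})$ whose triple‑product coefficient is computable from $\varPhi(Y_{12}^{(2)})=\Delta$ and the expansions already displayed; likewise $X_{12}^{(3)}$ restricts to a multiple of $\Delta(z_{11})\Delta(z_{22})\Delta(z_{33})$ plus lower‑rank terms, contributing a known constant; and the $q_{11}q_{22}q_{33}$‑coefficient of the diagonal restriction of $F_{12}$ is exactly the rank‑$3$ Fourier coefficient $a(F_{12};[1,1,1;0,0,0])=164$ from the table in Section \ref{sec:4-1}. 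Collecting these, the coefficient comparison yields one linear equation in $\gamma_3$, and solving it gives $\gamma_3=48h^3-288h^2+3144h-1131120$.

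The main obstacle is bookkeeping in the diagonal restriction: unlike the degree‑$2$ case, where $X_{12}^{(2)}|_{\text{diag}}=2^2\cdot 3\,\Delta(z_{11})\Delta(z_{22})$ is a single clean term, the degree‑$3$ forms $X_{12}^{(3)}|_{\text{diag}}$ and $Y_{12}^{(3)}|_{\text{diag}}$ mix contributions of ranks $1$, $2$, and $3$, so one must carefully isolate the pure $q_{11}q_{22}q_{33}$ part — in particular tracking the rank‑$3$ Fourier coefficients $a(X_{12}^{(3)};[1,1,1;0,0,0])=1132$ and $a(Y_{12}^{(3)};[1,1,1;0,0,0])=114476$ from the table, since $[1,1,1;0,0,0]=\operatorname{diag}(1,1,1)$ is precisely the form whose $q^T$ equals $q_{11}q_{22}q_{33}$. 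Once the correct constants are extracted, the remaining computation is elementary. Integrality of the resulting identity then follows automatically: $(E_4^{(3)})^3$, $X_{12}^{(3)}$, $Y_{12}^{(3)}$, $F_{12}$ all lie in $M_{12}(\Gamma_3)_{\mathbb{Z}}$ by Proposition \ref{prop}, and the coefficients $24h-720$, $48h^2-2800h+43200$, $48h^3-288h^2+3144h-1131120$ are integers since $h\in\mathbb{Z}$, which completes the proof.
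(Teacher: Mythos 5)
Your first stage coincides with the paper's: after applying $\varPhi$ and invoking Theorem \ref{degree2}, the one-dimensionality of $S_{12}(\Gamma_3)=\mathbb{C}\cdot F_{12}$ pins down everything except the coefficient of $F_{12}$, and the paper also determines that last coefficient by diagonal restriction. The gap is in the numerical inputs you propose for that restriction. Setting $z_{ij}=0$ for $i\ne j$ sends every off-diagonal variable $q_{ij}$ to $1$, so the coefficient of $q_{11}q_{22}q_{33}$ in $F(\operatorname{diag}(z_{11},z_{22},z_{33}))$ is \emph{not} the single Fourier coefficient $a(F;\operatorname{diag}(1,1,1))$ but the sum $\sum_T a(F;T)$ over all half-integral $T\geq 0$ with diagonal $(1,1,1)$ and arbitrary off-diagonal entries. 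The paper's own degree-2 data already illustrates this: $a(Y_{12};\operatorname{diag}(1,1))=1206$, yet the diagonal restriction of $Y_{12}^{(2)}$ has $q_{11}q_{22}$-coefficient $1+116+1206+116+1=1440$, which is the number actually used in the proof of Theorem \ref{degree2}. Accordingly, for $F_{12}$ the correct constant is $288$, not $164$: besides $[1,1,1;0,0,0]$ there are six admissible matrices of class $[1,1,1;0,0,1]$ and sixteen further rank-$3$ matrices equivalent to $[1,1,1;1,1,1]$, giving $164+6\cdot 18+16\cdot 1=288$. Likewise the constants needed for $X_{12}^{(3)}$ and $Y_{12}^{(3)}$ are $1728$ and $169632$, not the single coefficients $1132$ and $114476$ (for these non-cusp forms, rank-$1$ and rank-$2$ matrices with unit diagonal contribute as well). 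With your identifications the resulting linear equation, which should read $(24h)^3=373248000+169632(24h-720)+1728(48h^2-2880h+43200)+288\,\gamma_3$, is replaced by one whose solution has leading coefficient $13824/164$, not $48$, so it does not yield $\gamma_3=48h^3-288h^2+3144h-1131120$ and the asserted identity would not follow as written.

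Two smaller points. First, the quadratic coefficient $48h^2-2800h+43200$ that you carry over reproduces a typo in the statement of Theorem \ref{degree2}: the proof there, and Remark \ref{cpoly} via $c_1(h)=48(h-30)^2$, give $48h^2-2880h+43200$, and that is the value which must be fed into the diagonal-restriction equation. Second, the parts of your computation that are correct (factorization $\vartheta_{\mathcal{L}}^{(3)}|_{\operatorname{diag}}=\prod_i\vartheta_{\mathcal{L}}^{(1)}(z_{ii})$ giving $(24h)^3$, and $(E_4^{(3)})^3|_{\operatorname{diag}}$ contributing $720^3$) do match the paper; once you replace $164,\,1132,\,114476$ by the diagonal-restriction sums $288,\,1728,\,169632$, your argument becomes exactly the paper's proof.
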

\begin{proof}
We note that
\[
\varPhi(E_4^{(3)})=E_4^{(2)},\quad
\varPhi(X_{12}^{(3)})=X_{12}^{(2)},\quad
\varPhi(Y_{12}^{(3)})=Y_{12}^{(2)}.
\]
Since $S_{12}(\Gamma_3)=\mathbb{C}\cdot F_{12}$, we can write 
\begin{equation*}
\label{4-2-(1)}
\begin{split}
\vartheta_{\mathcal{L}}^{(3)}= & 
(E_4^{(3)})^3+(24h-720)Y_{12}^{(3)}+(48h^2-2800h+43200)X_{12}^{(3)}\\
& +c_2\cdot F_{12}
\end{split}
\end{equation*}
for some constant $c_2$. By an argument similar to the one used in the proof of 
Theorem \ref{degree2}, we can express the value $c_2$ as a
polynomial in $h$. As in the case of degree 2, we consider the
diagonal restrictions:
\begin{align*}
\label{4-2-(2)}
& \vartheta_{\mathcal{L}}^{(3)}
\left(\begin{pmatrix}z_{11} & 0 & 0\\ 0 & z_{22} & 0\\ 0 & 0 & z_{33}
      \end{pmatrix}\right)
=\cdots +(24h)^3\cdot q_{11}q_{22}q_{33}+\cdots,\\
%%%%%%%
& \left( E_4^{(3)}
\left(\begin{pmatrix}z_{11} & 0 & 0\\ 0 & z_{22} & 0\\ 0 & 0 & z_{33}
      \end{pmatrix}\right)\right)^3
=\cdots +373248000\cdot q_{11}q_{22}q_{33}+\cdots,\\
%%%%%%%
& Y_{12}^{(3)}
\left(\begin{pmatrix}z_{11} & 0 & 0\\ 0 & z_{22} & 0\\ 0 & 0 & z_{33}
      \end{pmatrix}\right)
=\cdots+169632\cdot q_{11}q_{22}q_{33}+\cdots,\\
%%%%%%%
& X_{12}^{(3)}
\left(\begin{pmatrix}z_{11} & 0 & 0\\ 0 & z_{22} & 0\\ 0 & 0 & z_{33}
      \end{pmatrix}\right)
=\cdots+1728\cdot q_{11}q_{22}q_{33}+\cdots,\\
%%%%%%%
& F_{12}
\left(\begin{pmatrix}z_{11} & 0 & 0\\ 0 & z_{22} & 0\\ 0 & 0 & z_{33}
      \end{pmatrix}\right)
=\cdots+288\cdot q_{11}q_{22}q_{33}+\cdots,
\end{align*}
where $q_{ii}:={\rm exp}(2\pi\sqrt{-1}z_{ii})$. From these formulas,
we obtain
\begin{align*}
(24h)^3=373248000 &+169632\cdot (24h-720)\\
                 &+1728\cdot (48h^2-2880h+43200)\\
                 &+288\cdot c_2.
\end{align*}
This implies
\[
c_2=48h^3-288h^2+3144h-1131120.
\]
This completes the proof.
%\hfill$\square$
%\\
\end{proof}
%%%%%%%%%%%%%%%%%%
\begin{remark}
\label{cpoly}
The polynomial coefficients in the expression of $\vartheta_{\mathcal{L}}^{(3)}$
are factored as follows:
\begin{equation}
\label{cpolynomial}
\begin{split}
& c_0(h):=24h-720=24(h-30),\\
& c_1(h):=48h^2-2880h+43200=48(h-30)^2,\\
& c_2(h):=48h^3-288h^2+3144h-1131120=24(h-30)(2h^2+48h+1571).
\end{split}
\end{equation}
All of these coefficients include the factor $h-30$.
If we take $\mathcal{L}=\gamma$ (cf. Conway-Sloane's list \cite{C-S}, p. 407,
Table 16.1), then the Coxeter number is just 30. In this case, we have
\[
\vartheta_{\gamma}^{(n)}=(E_4^{(n)})^3\qquad (n=1,2,3).
\]
%%%%%%%%%%%%%%%%
This identity is also justified by the fact that
\[
\gamma=(E_8)^3,
\]
where $E_8$ is the so-called $E_8$ lattice.
\end{remark}
The following table is 
obtained from Theorem \ref{mainresult}. Here, $h=h_{\mathcal{L}}$
is the Coxeter number of the Niemeier lattice $\mathcal{L}$.
\newpage
%%%%%%%%%%%%%%%%%%%%%%%%%%%%%%%%%%%%%%%%%%%%%%%%%
\begin{table*}[hbtp]
\caption{Representation of degree three theta series of Niemeier lattices}
\begin{center}
\begin{tabular}{llll} \hline
Name      & Components  &  $h$   &  Theta series  \\ \hline
$\alpha$  & $D_{24}$           & 46 &  
$\vartheta_{\alpha}^{(3)}=(E_4^{(3)})^3+384Y_{12}^{(3)}
                                        +12288X_{12}^{(3)}+3076224F_{12}$ \\ \hline
$\beta$  & $D_{16}E_8$         & 30 &
$\vartheta_{\beta}^{(3)}=(E_4^{(3)})^3$ \\ \hline
$\gamma$  & $E_8^3$           &  30 &
$\vartheta_{\gamma}^{(3)}=\vartheta_{\beta}^{(3)}$ \\ \hline
$\delta$  & $A_{24}$           & 25 &
$\vartheta_{\delta}^{(3)}=(E_4^{(3)})^3-120Y_{12}^{(3)}
                                       +1200X_{12}^{(3)}-482520F_{12}$ 
\\ \hline
$\epsilon$  & $D_{12}^2$      & 22 &
$\vartheta_{\epsilon}^{(3)}=(E_4^{(3)})^3-192Y_{12}^{(3)}
                                       +3072X_{12}^{(3)}-690240F_{12}$ \\ \hline
$\zeta$  & $A_{17}E_7$        & 18 &
$\vartheta_{\zeta}^{(3)}=(E_4^{(3)})^3-288Y_{12}^{(3)}
                                       +6912X_{12}^{(3)}-887904F_{12}$ \\ \hline
$\eta$  & $D_{10}E_7^2$       & 18 &
$\vartheta_{\eta}^{(3)}=\vartheta_{\zeta}^{(3)}$\\ \hline
$\theta$  & $A_{15}D_9$       & 16 &
$\vartheta_{\theta}^{(3)}=(E_4^{(3)})^3-336Y_{12}^{(3)}
                                      +9408X_{12}^{(3)}-957936F_{12}$ \\ \hline
$\iota$  & $D_8^3$           & 14 &
$\vartheta_{\iota}^{(3)}=(E_4^{(3)})^3-384Y_{12}^{(3)}
                                     +12288X_{12}^{(3)}-1011840F_{12}$ \\ \hline
$\kappa$  & $A_{12}^2$       & 13 &
$\vartheta_{\kappa}^{(3)}=(E_4^{(3)})^3-408Y_{12}^{(3)}
                                     +13872X_{12}^{(3)}-1033464F_{12}$ \\ \hline

$\lambda$  & $A_{11}D_7E_6$  & 12 &
$\vartheta_{\lambda}^{(3)}=(E_4^{(3)})^3-432Y_{12}^{(3)}
                                     +15552X_{12}^{(3)}-1051920F_{12}$ \\ \hline
$\mu$  & $E_6^4$             & 12 &
$\vartheta_{\mu}^{(3)}=\vartheta_{\lambda}^{(3)}$ \\ \hline
$\nu$  & $A_{9}^2D_6$        & 10 &
$\vartheta_{\nu}^{(3)}=(E_4^{(3)})^3-480Y_{12}^{(3)}
                                    +19200X_{12}^{(3)}-1080480F_{12}$ \\ \hline
$\xi$  & $D_6^4$             & 10 &
$\vartheta_{\xi}^{(3)}=\vartheta_{\nu}$ \\ \hline
$o$  & $A_8^3$               & 9 &
$\vartheta_{o}^{(3)}=(E_4^{(3)})^3-504Y_{12}^{(3)}
                                    +21168X_{12}^{(3)}-1091160F_{12}$ \\ \hline
$\pi$  & $A_7^2D_5^2$        & 8 &
$\vartheta_{\pi}^{(3)}=(E_4^{(3)})^3-528Y_{12}^{(3)}
                                    +23232X_{12}^{(3)}-1099824F_{12}$ \\ \hline
$\rho$  & $A_6^4$            & 7 &
$\vartheta_{\rho}^{(3)}=(E_4^{(3)})^3-552Y_{12}^{(3)}
                                    +25392X_{12}^{(3)}-1106760F_{12}$ \\ \hline
                                    
$\sigma$  & $A_5^4D_4$       & 6 &
$\vartheta_{\sigma}^{(3)}=(E_4^{(3)})^3-576Y_{12}^{(3)}
                                    +27648X_{12}^{(3)}-1112256F_{12}$ \\ \hline
$\tau$  & $D_4^6$            & 6 &
$\vartheta_{\tau}^{(3)}=\vartheta_{\sigma}$ \\ \hline
$\upsilon$  & $A_4^6$        & 5 &
$\vartheta_{\upsilon}^{(3)}=(E_4^{(3)})^3-600Y_{12}^{(3)}
                                    +30000X_{12}^{(3)}-1116600F_{12}$ \\ \hline

%\end{tabular}
%\end{center}
%\end{table*}

%\newpage

%\begin{table*}[hbtp]
%\caption{Continue}
%\begin{center}
%\begin{tabular}{llll} \hline
%Name      & Components  &  $h$   &  Theta series  \\ \hline

$\phi$  & $A_3^8$            & 4 &
$\vartheta_{\phi}^{(3)}=(E_4^{(3)})^3-624Y_{12}^{(3)}
                                   +32448X_{12}^{(3)}-1120080F_{12}$ \\ \hline
$\chi$  & $A_2^{12}$         & 3 &
$\vartheta_{\chi}^{(3)}=(E_4^{(3)})^3-648Y_{12}^{(3)}
                                   +34992X_{12}^{(3)}-1122984F_{12}$ \\ \hline
$\psi$  & $A_1^{24}$         & 2 &
$\vartheta_{\psi}^{(3)}=(E_4^{(3)})^3-672Y_{12}^{(3)}
                                   +37632X_{12}^{(3)}-1125600F_{12}$ \\ \hline
$\omega$ & Leech             & 0 &
$\vartheta_{\omega}^{(3)}=(E_4^{(3)})^3-720Y_{12}^{(3)}
                                  +43200X_{12}^{(3)}-1131120F_{12}$ \\ \hline
\end{tabular}
\end{center}
\end{table*}
%%%%%%%%%%%%%%%%%%%%%%%%%%%%%%%%%%%%%%%%%%%%%%%%%%%%%%%%%%%%%%%%
As stated in the Introduction, expression (\ref{degree3}) of Theorem \ref{mainresult}
is useful for studying the congruences between modular forms. 
As a straightforward conclusion, 
we can prove the following result.
%%%%%%%%%%%%%%%%%%%%
\begin{corollary}
\label{coxetercong}
Let $\mathcal{L}_i$ $(i=1,2)$ be Niemeier lattices with Coxeter number $h_i:=h_{\mathcal{L}_i}$.
If $h_1 \equiv h_2 \pmod{m}$ for an integer $m$, then
\[
\vartheta_{\mathcal{L}_1}^{(3)} \equiv \vartheta_{\mathcal{L}_2}^{(3)} \pmod{m}.
\]
\end{corollary}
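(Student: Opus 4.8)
The plan is to read the congruence off directly from the explicit formula (\ref{degree3}) of Theorem~\ref{mainresult}. Since that theorem expresses $\vartheta_{\mathcal{L}}^{(3)}$ as a $\mathbb{Z}$-linear combination of the four fixed modular forms $(E_4^{(3)})^3$, $Y_{12}^{(3)}$, $X_{12}^{(3)}$, and $F_{12}$, with coefficients given by the polynomials
\[
c_0(h)=24h-720,\quad c_1(h)=48h^2-2800h+43200,\quad c_2(h)=48h^3-288h^2+3144h-1131120
\]
(the leading $(E_4^{(3)})^3$ has constant coefficient $1$), the difference $\vartheta_{\mathcal{L}_1}^{(3)}-\vartheta_{\mathcal{L}_2}^{(3)}$ is
\[
\bigl(c_0(h_1)-c_0(h_2)\bigr)Y_{12}^{(3)}+\bigl(c_1(h_1)-c_1(h_2)\bigr)X_{12}^{(3)}+\bigl(c_2(h_1)-c_2(h_2)\bigr)F_{12}.
\]
By Proposition~\ref{prop} all three of $Y_{12}^{(3)}$, $X_{12}^{(3)}$, $F_{12}$ lie in $M_{12}(\Gamma_3)_{\mathbb{Z}}$, so it suffices to show that each coefficient difference $c_j(h_1)-c_j(h_2)$ is divisible by $m$.

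The key step is then the elementary observation that if $g(x)\in\mathbb{Z}[x]$ is any polynomial with integer coefficients and $a\equiv b\pmod m$, then $g(a)\equiv g(b)\pmod m$: this is immediate from the factorization $a^k-b^k=(a-b)(a^{k-1}+\cdots+b^{k-1})$ applied term by term. Since $c_0$, $c_1$, $c_2$ all lie in $\mathbb{Z}[x]$, the hypothesis $h_1\equiv h_2\pmod m$ gives $c_j(h_1)\equiv c_j(h_2)\pmod m$ for $j=0,1,2$. Hence every Fourier coefficient of $\vartheta_{\mathcal{L}_1}^{(3)}-\vartheta_{\mathcal{L}_2}^{(3)}$ is an integer divisible by $m$, which is exactly the assertion $\vartheta_{\mathcal{L}_1}^{(3)}\equiv\vartheta_{\mathcal{L}_2}^{(3)}\pmod m$ in the sense of Section~\ref{sec:2-2}.

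There is essentially no obstacle here; the corollary is a formal consequence of Theorem~\ref{mainresult} together with the integrality statement of Proposition~\ref{prop}. The only point to be a little careful about is that the congruence of modular forms is defined coefficientwise (Section~\ref{sec:2-2}), so one must invoke integrality of $Y_{12}^{(3)}$, $X_{12}^{(3)}$, $F_{12}$ to conclude that $\bigl(c_j(h_1)-c_j(h_2)\bigr)$ times each of their Fourier coefficients is again divisible by $m$; this is where Proposition~\ref{prop} is used. One could also phrase the whole argument more cleanly by noting that $\vartheta_{\mathcal{L}}^{(3)}$, as a function of $h$, is a polynomial in $h$ with coefficients in the $\mathbb{Z}$-module $M_{12}(\Gamma_3)_{\mathbb{Z}}$, and polynomial maps respect congruences on the source.
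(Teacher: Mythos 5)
Your proposal is correct and is exactly the argument the paper intends: the corollary is stated as a straightforward consequence of the explicit formula in Theorem~\ref{mainresult}, with the coefficients $c_j(h)\in\mathbb{Z}[h]$ respecting congruences and Proposition~\ref{prop} supplying integrality of $Y_{12}^{(3)}$, $X_{12}^{(3)}$, $F_{12}$. (Incidentally, you copied the paper's typo $2800$ for the linear term of $c_1$; the correct value $2880$ appears in Remark~\ref{cpoly}, but this does not affect the argument.)
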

%%%%%%%%%%%%%%%%%%
\begin{example}
Since $h_{\beta}=30$ and $h_{\rho}=7$ (see Table 1), we have
\[
\vartheta_{\beta}^{(3)} \equiv \vartheta_{\rho}^{(3)} \pmod{23}.
\]
For example,
\[
a(\vartheta_{\beta}^{(3)};[3,1,2])=749432632320 \equiv
799943308416=a(\vartheta_{\rho}^{(3)};[3,1,2]) \pmod{23}.
\]
Here we used the following abbreviation: 
For $\begin{pmatrix} a & \tfrac{b}{2} \\ \frac{b}{2}& c \end{pmatrix}
\in {\rm Sym}_2^*(\mathbb{Z})$, we set
\begin{equation}
\label{abb}
[a,b,c]
:=\begin{pmatrix} a & \tfrac{b}{2} \\ \tfrac{b}{2}& c \end{pmatrix}
\in {\rm Sym}_2^*(\mathbb{Z}).
\end{equation}
\end{example}
%%%%%%%%%%%%%%%%%%%%%%%%%%%%%%%
\begin{corollary}
\label{Lagrange}
Let $\{\,\mathcal{L}_i\;(i=1,2,3,4)\,\}$ be a set of Niemeier lattices with Coxeter number
$h_i=h_{\mathcal{L}_i}$ such that $h_1<h_2<h_3<h_4$.
Then, for any Niemeier lattice $\mathcal{L}$, 
the Niemeier theta series $\vartheta_{\mathcal{L}}^{(3)}$ has the following expression:
\[
\vartheta_{\mathcal{L}}^{(3)}=
\sum_{j=1}^4\ell_j(h_{\mathcal{L}})\vartheta_{\mathcal{L}_j}^{(3)},
\]
where $\ell_j(x)$\,$(j=1,2,3,4)$ are the Lagrange basis polynomials:
\[
\ell_j(x):=\prod_{\substack{1\leq m\leq 4\\ m\ne j}}\frac{x-x_m}{x_j-x_m}.
\]
\end{corollary}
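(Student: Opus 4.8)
The plan is to use Theorem~\ref{mainresult} directly: since every Niemeier theta series $\vartheta_{\mathcal{L}}^{(3)}$ is, by \eqref{degree3}, equal to $P(h_{\mathcal{L}})$, where
\[
P(x):=(E_4^{(3)})^3+(24x-720)Y_{12}^{(3)}+(48x^2-2800x+43200)X_{12}^{(3)}+(48x^3-288x^2+3144x-1131120)F_{12}
\]
is a polynomial of degree $\le 3$ in $x$ with coefficients in the $\mathbb{C}$-vector space $W:=\mathbb{C}(E_4^{(3)})^3\oplus\mathbb{C}Y_{12}^{(3)}\oplus\mathbb{C}X_{12}^{(3)}\oplus\mathbb{C}F_{12}$, we are really just asserting a Lagrange interpolation identity in the $4$-dimensional space $W$. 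The key point is that $P$, viewed as a $W$-valued polynomial in $x$, has degree at most $3$, so it is uniquely determined by its values at any four distinct points $x_1<x_2<x_3<x_4$; and the Lagrange basis polynomials $\ell_j$ reproduce exactly that unique polynomial.

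Concretely, I would argue as follows. First recall the standard fact that for a polynomial $P$ of degree at most $3$ with values in any vector space, and for any four distinct scalars $x_1,x_2,x_3,x_4$, one has the identity
\[
P(x)=\sum_{j=1}^{4}\ell_j(x)\,P(x_j),
\]
because both sides are polynomials in $x$ of degree $\le 3$ (the $\ell_j$ have degree exactly $3$) which agree at the four points $x_1,\dots,x_4$ (using $\ell_j(x_i)=\delta_{ij}$), hence agree identically. Applying this with $x_j=h_{\mathcal{L}_j}$ and $x=h_{\mathcal{L}}$, and then substituting $P(h_{\mathcal{L}})=\vartheta_{\mathcal{L}}^{(3)}$ and $P(h_{\mathcal{L}_j})=\vartheta_{\mathcal{L}_j}^{(3)}$ from Theorem~\ref{mainresult}, yields the claimed expansion
\[
\vartheta_{\mathcal{L}}^{(3)}=\sum_{j=1}^{4}\ell_j(h_{\mathcal{L}})\,\vartheta_{\mathcal{L}_j}^{(3)}.
\]

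There is essentially no obstacle here; the only thing one must be slightly careful about is that the hypothesis $h_1<h_2<h_3<h_4$ guarantees the four Coxeter numbers are \emph{distinct}, so that the denominators $x_j-x_m$ in the definition of $\ell_j$ are nonzero and the Lagrange basis is well defined. One might also remark, as a sanity check, that the identity is consistent with the Fourier-coefficient statements: comparing $T$-coefficients on both sides recovers the classical Lagrange interpolation formula for the integer sequence $T\mapsto a(\vartheta_{\mathcal{L}}^{(3)};T)$ as a cubic polynomial in $h_{\mathcal{L}}$ (visible already in Table~1), but this observation is not needed for the proof. So the proof is complete once the degree-$\le 3$ interpolation identity is invoked.
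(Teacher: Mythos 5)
Your proof is correct, and it takes a cleaner route than the paper's. The paper proves Corollary~\ref{Lagrange} by writing down the four equations (\ref{original}) for $\vartheta_{\mathcal{L}_i}^{(3)}$, checking that the $4\times 4$ coefficient determinant equals $55296\cdot\varDelta(h_1,h_2,h_3,h_4)\neq 0$, solving for the generators $(E_4^{(3)})^3$, $Y_{12}^{(3)}$, $X_{12}^{(3)}$, $F_{12}$ in terms of the $\vartheta_{\mathcal{L}_j}^{(3)}$, substituting back into (\ref{degree3}), and then identifying the resulting coefficient polynomials $f_j$ with the Lagrange basis polynomials $\ell_j$ by a direct calculation. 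You instead observe that (\ref{degree3}) exhibits $\vartheta_{\mathcal{L}}^{(3)}$ as $P(h_{\mathcal{L}})$ for a fixed polynomial $P$ of degree at most $3$ with values in the span of the four generators, and invoke the standard vector-valued Lagrange interpolation identity $P(x)=\sum_j\ell_j(x)P(x_j)$ at the four distinct nodes $h_1<h_2<h_3<h_4$ (distinctness being exactly what the hypothesis provides). This absorbs the paper's determinant computation and the final ``direct calculation'' into one classical fact, and as you note it needs nothing about the specific coefficients beyond the degree bound --- in particular it is indifferent to the discrepancy between the coefficient $48h^2-2800h+43200$ printed in Theorem~\ref{mainresult} and the value $48h^2-2880h+43200=48(h-30)^2$ appearing in the Introduction and in Remark~\ref{cpoly}. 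What the paper's more computational approach buys in exchange is the explicit nonvanishing of the transition determinant, i.e.\ the statement that any four such theta series actually form a basis of the relevant four-dimensional space, which is slightly more information than the interpolation identity alone.
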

%%%%%%%%%%%%%%%%
\begin{proof}
We recall expression (\ref{degree3}) of Theorem \ref{mainresult}, and we solve the system of equations
\begin{equation}
\label{original}
\vartheta_{\mathcal{L}_i}^{(3)}
=(E_4^{(3)})^3+c_0(h_i)Y_{12}^{(3)}+c_1(h_i)X_{12}^{(3)}+c_2(h_i)F_{12},
\quad (i=1,2,3,4),
\end{equation}
with respect to $(E_4^{(3)})^3$, $Y_{12}^{(3)}$, $X_{12}^{(3)}$, and $F_{12}$.
Here, $c_j(h)$ is the polynomial defined in (\ref{cpolynomial}) of Remark \ref{cpoly}.
Since
\[
\begin{vmatrix}
1 & c_0(h_1) & c_1(h_1) & c_2(h_1) \\
1 & c_0(h_2) & c_1(h_2) & c_2(h_2) \\
1 & c_0(h_3) & c_1(h_3) & c_2(h_3) \\
1 & c_0(h_4) & c_1(h_4) & c_2(h_4) 
\end{vmatrix}
=55296\cdot \varDelta(h_1,h_2,h_3,h_4)\ne 0,\quad (\varDelta:\text{the differente}),
\]
the equations (\ref{original}) are solvable.
Again considering expression (\ref{degree3}), we conclude that 
$\vartheta_{\mathcal{L}}^{(3)}$ has the following expression:
\[
\vartheta_{\mathcal{L}}^{(3)}
=\sum_{j=1}^4 f_j(h_{\mathcal{L}})\vartheta_{\mathcal{L}_j}^{(3)},
\]
for some $f_j(x)\in \mathbb{Q}(h_1,h_2,h_3,h_4)[x]$. 
A direct calculation shows that
\[
f_j(x)=\prod_{\substack{1\leq m\leq 4\\ m\ne j}}\frac{x-x_m}{x_j-x_m}=\ell_j(x).
\]
This completes the proof of Corollary \ref{Lagrange}.
\end{proof}
%%%%%%%%%%%%%%%%%%%%%%%%%%%%%%%%%%%%%%%%%%%%%%%%
%%%%%%%%%%%%%%%%%%%%%%%%%%%%%%%%%%%%%%%%%%%%%%%%%%%%%%%%%%%%%%%%%%%%%%%%%%%%%%%%%%%%%%%%%%%%%%%%%%%%%%
\section{Congruence properties of theta series for Niemeier lattices}
\label{sec:5}
%%%%%%%%%%%%%%%%%%%
\subsection{Congruence relation between theta series}
\label{sec:5-1}
%%%%%%%%%%%%%%%%%%
We will now prove some congruence relations satisfied by the Niemeier theta series.
For this, we need information about the Fourier coefficients of the generators $(E_4^{(3)})^3$,
$Y_{12}^{(3)}$, $X_{12}^{(3)}$, and $F_{12}$.

The results in \cite{O-Y} and \cite{Ka} can help us to calculate the Fourier coefficients of the 
Eisenstein series $E_k^{(3)}$.

By combining the Fourier coefficients of Miyawaki's cusp form $F_{12}$ (cf. \cite{Miyawaki}),
we obtain numerical examples of the Fourier coefficients of 
$\vartheta_{\mathcal{L}}^{(3)}$ ($\mathcal{L}$: Niemeier lattice).
%%%%%%%%%%
We have the following result:
%%%%%%%%%%
\begin{theorem}
\label{theoremcong}
The following congruence relations hold:
\begin{equation}
\label{cong}
\begin{split}
& \vartheta_\alpha^{(3)} \equiv
   \vartheta_\omega^{(3)} \equiv \vartheta_{[4,2,6]}^{(3)} \pmod{23},\\
& \vartheta_\delta^{(3)} \equiv
   \vartheta_\psi^{(3)} \equiv \vartheta_{[2,2,12]}^{(3)} \pmod{23},
\end{split}
\end{equation}
where $\alpha$, $\delta$, $\psi$, and $\omega$ are Niemeier lattices listed
in Table 1.
%\cite{C-S}, p.407, Table 16.1.
\end{theorem}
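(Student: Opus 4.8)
The plan is to separate the congruences in (\ref{cong}) into congruences among Niemeier theta series and congruences with the two binary theta series. The congruences $\vartheta_\alpha^{(3)}\equiv\vartheta_\omega^{(3)}$ and $\vartheta_\delta^{(3)}\equiv\vartheta_\psi^{(3)}\pmod{23}$ are immediate from Corollary~\ref{coxetercong}: by Table~1 the Coxeter numbers are $h_\alpha=46$, $h_\omega=0$, $h_\delta=25$, $h_\psi=2$, and $46\equiv 0$, $25\equiv 2\pmod{23}$. Hence it only remains to prove $\vartheta_\omega^{(3)}\equiv\vartheta_{[4,2,6]}^{(3)}$ and $\vartheta_\psi^{(3)}\equiv\vartheta_{[2,2,12]}^{(3)}\pmod{23}$.

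For the second step I would invoke Theorem~\ref{ThB-N}. Since $23\equiv 3\pmod 4$, since $23\geq 2\cdot 3+3$, and since the binary forms $[4,2,6]$ and $[2,2,12]$ both have determinant $23$, Theorem~\ref{ThB-N} applies with $p=23$ and $n=3$; because $(p+1)/2=12$ it produces modular forms $G_1,G_2\in M_{12}(\Gamma_3)_{\mathbb{Z}_{(23)}}$ with $\vartheta_{[4,2,6]}^{(3)}\equiv G_1$ and $\vartheta_{[2,2,12]}^{(3)}\equiv G_2\pmod{23}$. As $\vartheta_\omega^{(3)},\vartheta_\psi^{(3)}\in M_{12}(\Gamma_3)_{\mathbb{Z}}$ (they are theta series of even unimodular lattices of rank $24$, see Section~\ref{sec:2-6}), the differences $\vartheta_\omega^{(3)}-G_1$ and $\vartheta_\psi^{(3)}-G_2$ lie in $M_{12}(\Gamma_3)_{\mathbb{Z}_{(23)}}$, and the two remaining congruences become the assertions that these two forms vanish mod $23$.

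To conclude I would apply the Sturm bound of Theorem~\ref{theorem4}. For $n=3$ and $k=12$ the bound $(4/3)^n\,k/16$ equals $16/9<2$, so it suffices to verify
\[
a(\vartheta_\omega^{(3)};T)\equiv a(\vartheta_{[4,2,6]}^{(3)};T)\quad\text{and}\quad a(\vartheta_\psi^{(3)};T)\equiv a(\vartheta_{[2,2,12]}^{(3)};T)\pmod{23}
\]
for all $0\leq T=(t_{ij})\in\mathrm{Sym}_3^*(\mathbb{Z})$ with $t_{11},t_{22},t_{33}\leq 1$ (using $a(G_i;T)\equiv a(\vartheta_{[\,\cdot\,]}^{(3)};T)\pmod{23}$). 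This is a finite list of matrices. The coefficients $a(\vartheta_\omega^{(3)};T)$ and $a(\vartheta_\psi^{(3)};T)$ I would read off from the explicit expression (\ref{degree3}) of Theorem~\ref{mainresult} together with the Fourier coefficients of $E_4^{(3)}$ (via \cite{O-Y}, \cite{Ka}), of $X_{12}^{(3)}$ and $Y_{12}^{(3)}$ (from their definitions in Section~\ref{sec:4-1}), and of Miyawaki's form $F_{12}$ (\cite{Miyawaki}); the coefficients $a(\vartheta_{[4,2,6]}^{(3)};T)$ and $a(\vartheta_{[2,2,12]}^{(3)};T)$ are representation numbers of the two binary forms and are computed directly. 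Comparing mod $23$ finishes the proof.

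The step I expect to be the actual work is this last, purely computational one: enumerating the $T\geq 0$ with unit diagonal (these are essentially Gram matrices of systems of vectors of norm $\leq 2$, so the list is short but must be organized carefully, e.g.\ by $\mathrm{rank}(T)$) and having reliable values of the degree-$3$ Eisenstein series and of $F_{12}$ at those $T$. A small preliminary point is to fix the precise normalization under which $[4,2,6]$ and $[2,2,12]$ satisfy the determinant hypothesis of Theorem~\ref{ThB-N}; once that is settled nothing further of substance remains.
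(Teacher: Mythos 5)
Your proposal is correct and follows essentially the same route as the paper: apply Theorem~\ref{ThB-N} (with $p=23$, $n=3$, $(p+1)/2=12$) to the binary theta series, then use the Sturm bound of Theorem~\ref{theorem4} (indeed $(4/3)^3\cdot 12/16=16/9<2$, so only $T$ with $t_{ii}\leq 1$ need checking) together with explicit Fourier-coefficient data coming from Theorem~\ref{mainresult}. The only difference is that you dispatch $\vartheta_\alpha^{(3)}\equiv\vartheta_\omega^{(3)}$ and $\vartheta_\delta^{(3)}\equiv\vartheta_\psi^{(3)}$ via Corollary~\ref{coxetercong} instead of including $\alpha$ and $\delta$ in the numerical tables as the paper does, a shortcut the paper itself sanctions in the remark following the theorem.
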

%%%%%%%%%%%%%
\begin{proof}
We prove the first congruence relation. 
Since det$([4,2,6])=23 \equiv 3 \pmod{4}$, we can apply 
Theorem \ref{ThB-N} to $\vartheta_{[4,2,6]}^{(3)}$. As a consequence,
there is a modular form $G_1\in M_{12}(\Gamma_3)_{\mathbb{Z}_{(23)}}$
such that
\[
\vartheta_{[4,2,6]}^{(3)} \equiv G_1 \pmod{23}.
\]
We obtain the following tables:
%\vspace{1mm}
%\\
\begin{table}[htbp]
\begin{center}
\begin{tabular}{c|ccc}

   $T$     &    $a(\vartheta_{[4,2,6]}^{(2)};T)$  &  $a(\vartheta_\alpha^{(2)};T)$  
                                                  &  $a(\vartheta_\omega^{(2)};T)$ \\ \hline
$[0,0,0]$  &          1            &       1       &       1     \\
$[1,0,0]$  &          0            &     1104      &       0     \\
$[1,1,1]$  &          0            &    97152      &       0     \\
$[1,0,1]$  &          0            &   1022304     &       0

\end{tabular}
\end{center}
\end{table}
\begin{table}[htbp]
\begin{center}
\begin{tabular}{c|ccc}

   $T$     &    $a(\vartheta_{[4,2,6]}^{(3)};T)$  &  $a(\vartheta_\alpha^{(3)};T)$  
                                                  &  $a(\vartheta_\omega^{(3)};T)$ \\ \hline
$[1,1,1;1,1,1]$  &          0            &   4177536       &       0     \\
$[1,1,1;0,0,1]$  &          0            &   81607680      &       0     \\
$[1,1,1;0,0,0]$  &          0            &   781393536     &       0     \\

\end{tabular}
\end{center}
\end{table}
\\
Here, we used the abbreviation $[a,b,c;d,e,f]$ introduced in Section 4.1, (\ref{matrixdegree3}).
From the information in the above tables, we can show that
\[
a(\vartheta_{[4,2,6]}^{(3)};T) \equiv a(G_1;T)
                               \equiv a(\vartheta_\alpha^{(3)};T)
                               \equiv a(\vartheta_\omega^{(3)};T) \pmod{23}
\]
for all $T=(t_{ij})\in {\rm Sym}_3^*(\mathbb{Z})$ with $t_{ii}\leq 1$. By using
Theorem \ref{theorem4}, we obtain
\[
\vartheta_{[4,2,6]}^{(3)} \equiv \vartheta_\alpha^{(3)} \equiv \vartheta_\omega^{(3)}
\pmod{23}.
\]
The proof of the second congruence relation proceeds in a similar manner.
There is a modular form $G_2\in M_{12}(\Gamma_3)_{\mathbb{Z}_{(23)}}$ such that
\[
\vartheta_{[2,2,12]}^{(3)} \equiv G_2 \pmod{23}.
\] 
In this case, we obtain the following tables:
%\newpage
\begin{table}[htbp]
\begin{center}
\begin{tabular}{c|ccc}

   $T$     &    $a(\vartheta_{[2,2,12]}^{(2)};T)$  &  $a(\vartheta_\delta^{(2)};T)$  
                                                  &  $a(\vartheta_\psi^{(2)};T)$ \\ \hline
$[0,0,0]$  &          1            &       1       &       1     \\
$[1,0,0]$  &          2            &     600       &      48     \\
$[1,1,1]$  &          0            &    27600      &       0     \\
$[1,0,1]$  &          0            &   303600      &      2208

\end{tabular}
\end{center}
\end{table}
%%%%%%%%%%%%%%
{}
\begin{table}[htbp]
\begin{center}
\begin{tabular}{c|ccc}

   $T$     &    $a(\vartheta_{[2,2,12]}^{(3)};T)$  &  $a(\vartheta_\delta^{(3)};T)$  
                                                  &  $a(\vartheta_\psi^{(3)};T)$ \\ \hline
$[1,1,1;1,1,1]$  &          0            &   607200        &       0     \\
$[1,1,1;0,0,1]$  &          0            &  12751200       &       0     \\
$[1,1,1;0,0,0]$  &          0            &  127512000      &    97152    \\

\end{tabular}
\end{center}
\end{table}

Again, by Theorem \ref{theorem4}, we obtain
\[
\vartheta_{[2,2,12]}^{(3)} \equiv G_2 \equiv \vartheta_\delta^{(3)} \equiv \vartheta_\psi^{(3)}
\pmod{23}.
\]
This completes the proof.
%\hfill$\square$
%\\
\end{proof}
\begin{remark}
The congruence relations
\[
\vartheta_{\alpha}^{(3)} \equiv \vartheta_{\omega}^{(3)} \pmod{23}\;\;\text{and}\;\;
\vartheta_{\delta}^{(3)} \equiv \vartheta_{\psi}^{(3)} \pmod{23}
\]
can be also proved by Corollary \ref{coxetercong}.
\end{remark}
%%%%%%%%%%%%%%%%%%%%%%%%%%%%%%%%%%%%%%%%%%%%%%
\subsection{Theta operator on theta series and the mod $p$ singular form}
\label{sec:5-2}
In the previous section, we saw some congruence relations arising from
theta series. Such relations can be reformulated by the terminology of the
theta operator and the mod $p$ singular forms that were introduced in Section \ref{sec:2-4}.
%%%%%%%%%
\begin{theorem}
{\rm (1)} The following congruence relations hold:
\[
\varTheta (\vartheta_\alpha^{(2)}) \equiv
\varTheta (\vartheta_\delta^{(2)}) \equiv
\varTheta (\vartheta_\psi^{(2)}) \equiv
\varTheta (\vartheta_\omega^{(2)}) \equiv
0 \pmod{23}.
\]
{\rm (2)} The theta series
$\vartheta_\alpha^{(3)}$,
$\vartheta_\delta^{(3)}$,
$\vartheta_\psi^{(3)}$, and
$\vartheta_\omega^{(3)}$
are mod 23 singular modular forms with the nontrivial maximal 23-rank 2.
\end{theorem}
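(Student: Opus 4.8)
The plan is to obtain both statements from the degree-$3$ congruences already established in Theorem~\ref{theoremcong}, pushed down to degree $2$ by the Siegel operator $\varPhi$, together with two elementary facts about the theta series $\vartheta_Q^{(n)}$ of a binary quadratic form $Q$ with $\det Q=23$ (namely $Q=[4,2,6]$ and $Q=[2,2,12]$): in degree $2$, every $T$ with $a(\vartheta_Q^{(2)};T)\ne 0$ satisfies $\det(2T)\equiv 0\pmod{23}$; and in degree $3$, $\vartheta_Q^{(3)}$ is supported on indices $T$ of rank $\le 2$. Both facts follow by writing $2T={}^tXQX$ for an integer matrix $X$ and using $\det({}^tXQX)=\det(X)^2\det Q$ (for $X$ square) together with $\mathrm{rank}({}^tXQX)\le\mathrm{rank}(Q)=2$.

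For (1), I would apply $\varPhi$ to the congruences $\vartheta_\alpha^{(3)}\equiv\vartheta_\omega^{(3)}\equiv\vartheta_{[4,2,6]}^{(3)}$ and $\vartheta_\delta^{(3)}\equiv\vartheta_\psi^{(3)}\equiv\vartheta_{[2,2,12]}^{(3)}\pmod{23}$; since $\varPhi(\vartheta_Q^{(3)})=\vartheta_Q^{(2)}$ and $\varPhi$ preserves congruences modulo $23$, this yields $\vartheta_\alpha^{(2)}\equiv\vartheta_\omega^{(2)}\equiv\vartheta_{[4,2,6]}^{(2)}$ and $\vartheta_\delta^{(2)}\equiv\vartheta_\psi^{(2)}\equiv\vartheta_{[2,2,12]}^{(2)}\pmod{23}$. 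Because $\varTheta$ multiplies the coefficient at $T$ by $\det T=\tfrac14\det(2T)\in\mathbb{Z}_{(23)}$, it too preserves congruences modulo $23$, so it suffices to show $\varTheta(\vartheta_Q^{(2)})\equiv 0\pmod{23}$ for $Q\in\{[4,2,6],[2,2,12]\}$. For such $Q$, whenever $a(\vartheta_Q^{(2)};T)\ne 0$ we have $\det(2T)=23\,\det(X)^2\in 23\mathbb{Z}$ for the relevant $X\in M_2(\mathbb{Z})$, hence $\det T\in 23\mathbb{Z}_{(23)}$ and $a(\vartheta_Q^{(2)};T)\det T\equiv 0\pmod{23}$; as this holds for every $T$, $\varTheta(\vartheta_Q^{(2)})\equiv 0\pmod{23}$, and transporting along the congruences above gives $\varTheta(\vartheta_\alpha^{(2)})\equiv\varTheta(\vartheta_\omega^{(2)})\equiv\varTheta(\vartheta_\delta^{(2)})\equiv\varTheta(\vartheta_\psi^{(2)})\equiv 0\pmod{23}$.

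For (2), I would work directly in degree $3$ and again invoke Theorem~\ref{theoremcong}, so that it suffices to check that $\vartheta_{[4,2,6]}^{(3)}$ and $\vartheta_{[2,2,12]}^{(3)}$ are mod $23$ singular modular forms of maximal $23$-rank $2$. Since $Q$ has rank $2$, $a(\vartheta_Q^{(3)};T)=0$ for every $T$ of rank $3$; hence, by the congruences, $a(\vartheta_\alpha^{(3)};T)\equiv a(\vartheta_\omega^{(3)};T)\equiv a(\vartheta_\delta^{(3)};T)\equiv a(\vartheta_\psi^{(3)};T)\equiv 0\pmod{23}$ for every rank-$3$ index $T$. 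For the required non-vanishing at rank $2$, I would take $T_0\in\mathrm{Sym}_3^*(\mathbb{Z})$ whose leading $2\times 2$ block is $\tfrac12 Q$ and whose last row and column are zero; then $\mathrm{rank}(T_0)=2$, and $a(\vartheta_Q^{(3)};T_0)$ equals the number of $X'\in M_2(\mathbb{Z})$ with ${}^tX'QX'=Q$, i.e. $\#\mathrm{Aut}(Q)=2$, since a positive definite binary quadratic form of discriminant $-92$ has automorphism group $\{\pm I_2\}$. Thus $a(\vartheta_\alpha^{(3)};T_0)\equiv 2\not\equiv 0\pmod{23}$, and the analogous $T_0$ for $[2,2,12]$ disposes of $\delta$ and $\psi$; combined with the rank-$3$ vanishing, this pins the maximal $23$-rank at exactly $2$.

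I expect the reductions via $\varPhi$ and Theorem~\ref{theoremcong} to be entirely routine; the one place that needs a little thought is the rank-$2$ non-vanishing in part (2), because many low rank-$2$ Fourier coefficients of $\vartheta_\alpha^{(3)}$ are themselves $\equiv 0\pmod{23}$ (indeed this is precisely why $\varTheta(\vartheta_\alpha^{(2)})\equiv 0$ does \emph{not} already force $\vartheta_\alpha^{(2)}$ to be mod $23$ singular), so the index $T_0$ must be chosen on purpose --- the block-diagonal choice works because its coefficient is visibly an automorphism number. If one prefers, this step can instead be settled by exhibiting a single non-vanishing rank-$2$ coefficient directly from the Fourier expansions of $\vartheta_\alpha^{(3)},\vartheta_\omega^{(3)},\vartheta_\delta^{(3)},\vartheta_\psi^{(3)}$ computed in Section~\ref{sec:5-1}.
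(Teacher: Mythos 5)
Your proposal is correct and follows essentially the same route as the paper: both parts are deduced from Theorem \ref{theoremcong} together with the fact that $\vartheta_Q^{(n)}$ for $Q\in\{[4,2,6],[2,2,12]\}$ (with $\det Q=23$, rank $2$) is supported on indices $T$ with $23\mid\det(2T)$ and rank$(T)\le 2$. Even your non-vanishing witness coincides with the paper's: the coefficients $a(\vartheta_\alpha^{(2)};[2,1,3])$ and $a(\vartheta_\delta^{(2)};[1,1,6])$ it quotes are exactly the automorphism-count $\#\mathrm{Aut}(Q)=2$ at the index $\tfrac12Q$ that you construct block-diagonally in degree $3$.
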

\begin{proof}
Statement (1) is a consequence of Theorem \ref{theoremcong}. To prove 
statement (2),
we must show that
\[
a(\vartheta_{\mathcal{L}}^{(3)};T) \not\equiv 0 \pmod{23}
\]
for some $T$ with rank$(T)=2$. This comes from the following:
\begin{align*}
& a(\vartheta_\alpha^{(2)};[2,1,3]) \equiv a(\vartheta_\delta^{(2)};[1,1,6])\\
& \equiv a(\vartheta_\psi^{(2)};[1,1,6]) \equiv a(\vartheta_\omega^{(2)};[2,1,3])\\
& \equiv 2 \pmod{23}.
\end{align*}
%\hfill$\square$
%\\
\end{proof}
\noindent
\textbf{Acknowledgement:}\;
The second author is partially supported
by the Grants-in-aid (S) (No. 23224001).

%%%%%%%%%%
%\begin{remark}
%Here we give numerical examples which support the congruence relation
%\[
%\vartheta_{[2,1,3]}^{(2)} \equiv \vartheta_{\omega}^{(2)} \pmod{23},
%\]
%where $\mathcal{L}=\omega$: Leech lattice.
%\end{remark}

% Non-BibTeX users please use

%%
%%
Shoyu Nagaoka\\
Department of Mathematics\\
Kindai University\\
Higashi-Osaka, Osaka 577-8502, Japan\\
Email:nagaoka@math.kindai.ac.jp\\
and\\
Sho Takemori\\
Department of Mathematics\\
Hokkaido University\\
Kita 10, Nishi 8, Kita-Ku\\
Sapporo, Hokkaido, 060-0810, Japan\\
E-mail: stakemorii@gmail.com

\end{document}